\newcommand{\bpm}{\begin{pmatrix}}
\newcommand{\epm}{\end{pmatrix}}
\newcommand{\bsm}{\begin{smallmatrix}}
\newcommand{\esm}{\end{smallmatrix}}
\newcommand{\bspm}{\left(\begin{smallmatrix}}
\newcommand{\espm}{\end{smallmatrix}\right)}
\newcommand{\bbm}{\begin{bmatrix}}
\newcommand{\ebm}{\end{bmatrix}}
    \newcommand{\BA}{{\mathbb {A}}} 
    \newcommand{\BC}{{\mathbb {C}}}
     \newcommand{\BZ}{{\mathbb {Z}}}
     \newcommand{\sH}{{\mathscr {H}}}
     \newcommand{\CP}{{\mathcal {P}}}
     \newcommand{\CR}{{\mathcal {R}}}
    \newcommand{\CS}{{\mathcal {S}}} 
    \newcommand{\CW}{{\mathcal {W}}}
     \newcommand{\fo}{{\mathfrak{o}}}
    \newcommand{\RG}{{ {G}}}
     \newcommand{\RP}{{\mathrm {P}}}
    \newcommand{\lenth}{{\mathrm {\lenth}}}
     \newcommand{\GL}{{\mathrm{GL}}}
    \newcommand{\Hom}{{\mathrm{Hom}}} 
    \newcommand{\Ind}{{\mathrm{Ind}}}
    \renewcommand{\Re}{{\mathrm{Re}}}
 \newcommand{\SL}{{\mathrm{SL}}}
\newcommand{\vol}{{\mathrm{vol}}}
 \newcommand{\Sp}{{\mathrm{Sp}}} \newcommand{\diag}{{\mathrm{diag}}} \newcommand{\pr}{{\mathrm{pr}}}
    \newcommand{\bx}{{\bf {x}}}       
\newcommand{\bG}{{{G}}}  \newcommand{\bW}{{\bf {W}}}
    \newcommand{\wt}{\widetilde}  
    \newcommand{\pair}[1]{\langle {#1} \rangle}
    \newcommand{\wpair}[1]{\left\{{#1}\right\}}
    \newcommand{\ov}{\overline}
    \newcommand{\incl}{\hookrightarrow}
     \newcommand{\ra}{\rightarrow}
    \theoremstyle{plain}
       \newtheorem*{theorem*}{Theorem}
    \newtheorem{thm}{Theorem}[section] 
    \newtheorem{lem}[thm]{Lemma}  \newtheorem{prop}[thm]{Proposition}
\newtheorem{rmk}[thm]{Remark}
    \numberwithin{equation}{section}
\title{On a Rankin-Selberg integral of the $L$-function for $\wt \SL_2\times \GL_2$}
\author{Qing Zhang}
\address{Department of Mathematical Sciences, KAIST, 291, Daehak-ro, Yuseong-gu, Daejeon, 34141, Korea}
\email{qingzhang0@gmail.com}
\subjclass[2010]{11F70}
\keywords{Rankin-Selberg integral, L-function, exceptional group $G_2$, periods}
\begin{document}

\maketitle

\begin{abstract}
    We present a Rankin-Selberg integral on the exceptional group $G_2$ which represents the $L$-function for generic cuspidal representations of $\wt \SL_2\times \GL_2$. As an application, we show that certain Fourier-Jacobi type periods on $G_2$ are non-vanishing.
\end{abstract}

\section{Introduction}

Let $F$ be a global field with the ring of adeles $\BA$. We assume that the characteristics of $F$ is not 2. We present in this paper a Shimura type integral on the exceptional group $G_2$ which represents the $L$-function 
$$L(s,\wt\pi\times (\chi\otimes \tau))L(s,\wt\pi\otimes (\chi\otimes\omega_\tau)),$$
where $\wt\pi$ is an irreducible genuine cuspidal representation of $\wt \SL_2(\BA) $, $\tau$ is an irreducible generic cuspidal representation of $\GL_2(\BA)$ and $\chi$ is the quadratic character of $F^\times\backslash\BA^\times$ defined by $\chi(a)=\prod_v (a_v,-1)_{F_v}$, where $a=(a_v)_v \in \BA^\times$ and $(~,~)_{F_v}$ is the Hilbert symbol on $F_v$. 

To give more details about the integral, we introduce some notations. The group $G_2$ has two simple roots and we label the short root by $\alpha$ and the long root by $\beta$. Let $P=MV$ (resp. $P'=M'V'$) be the maximal parabolic subgroup of $G_2$ such that the root space of $\beta$ is in the Levi $M$ (resp. the root space of $\alpha$ is in the Levi $M'$). The Levi subgroups $M$ and $M'$ are isomorphic to $\GL_2$. Let $J$ be the subgroup of $P$ which is isomorphic to $\SL_2\ltimes V$. Let $\wt \SL_2(\BA)$ be the metaplectic double cover of $ \SL_2(\BA)$. There is a Weil representation $\omega_\psi$ of $\wt \SL_2(\BA)$ for a nontrivial additive character $\psi$ of $F\backslash \BA$. Let $\wt \theta_\phi$ be a corresponding theta series associated with a function $\phi\in \CS(\BA)$. Let $\tau$ be an irreducible cuspidal automorphic representations of $\GL_2(\BA)$. For $f_s\in \Ind_{P'(\BA)}^{G_2(\BA)}(\tau\otimes \delta_{P'}^s)$, we can form an Eisenstein series $E(g,f_s)$ on $G_2(\BA)$.  Let $\wt \pi$ be an irreducible genuine cuspidal automorphic forms of $\wt \SL_2(\BA)$. For a cusp form $\wt \varphi\in \wt \pi$, we consider the integral
$$I(\wt \varphi, \phi,f_s)=\int_{\SL_2(F)\backslash \SL_2(\BA)}\int_{V(F)\backslash V(\BA)}\wt \varphi(g)\wt\theta_\phi(vg)E(vg,f_s)dvdg.$$
Our main result is the following
\begin{thm}\label{thm: main 1}
The above integral is absolutely convergent for $\Re(s)\gg 0$ and can be meromorphically continued to all $s\in \BC$. When $\Re(s)\gg 0$, the integral $I(\wt\varphi,\phi,f_s)$ is Eulerian. Moreover, at an unramified place $v$, the local integral represents
the $L$-function
$$\frac{L(3s-1,\wt\pi_v\times (\chi_v\otimes\tau_v))L(6s-5/2,\wt\pi_v\otimes (\chi_v\otimes\omega_{\tau_v}))}{L(3s-1/2,\tau_v)L(6s-2,\omega_{\tau_v})L(9s-7/2,\tau_v\otimes\omega_{\tau_v})}. $$ 
Here $\chi_v$ is the unramified nontrivial quadratic character of $F_v^\times$.
\end{thm}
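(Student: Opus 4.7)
For $\Re(s)\gg 0$, the Eisenstein series $E(\cdot,f_s)$ converges absolutely and is of moderate growth on $G_2(\BA)$, while $\wt\theta_\phi$ is of moderate growth on $V(F)\backslash V(\BA) \times \SL_2(F)\backslash \SL_2(\BA)$, and the cusp form $\wt\varphi$ is rapidly decreasing on $\SL_2(F)\backslash \SL_2(\BA)$. Since the $V$-integral is over a compact quotient, pairing rapid decrease against moderate growth yields absolute convergence of $I(\wt\varphi,\phi,f_s)$. Meromorphic continuation in $s$ follows at once from that of $E(g,f_s)$.

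\textbf{Unfolding the Eisenstein series.} For the Eulerian property, I would write $E(g,f_s)=\sum_{\gamma \in P'(F)\backslash G_2(F)} f_s(\gamma g)$ and analyze the $J(F)=\SL_2(F)\ltimes V(F)$ orbits on $P'(F)\backslash G_2(F)$ via the Bruhat decomposition of $G_2$. The expectation is that only one double coset survives; the vanishing on the other orbits should follow from a combination of (i) cuspidality of $\wt\varphi$ killing Fourier coefficients along unipotent subgroups of $\SL_2$, (ii) cuspidality of $\tau$ applied to Levi parts of orbit stabilizers, and (iii) the oscillation of $\wt\theta_\phi$ against nontrivial characters coming from the Weil representation $\omega_\psi$. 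On the open orbit I would interchange the outer sum with the inner integrals, collapse $V(F)$ against $V(\BA)$, and absorb the stabilizer quotient. The resulting integral should pair a Fourier--Jacobi type coefficient of $\wt\varphi$ (obtained by integrating $\wt\varphi$ against $\wt\theta_\phi$) with a Whittaker coefficient of $\tau$ extracted from the section $f_s$.

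\textbf{Local factorization and unramified computation.} Uniqueness of the Whittaker model for $\tau$ together with the uniqueness of the corresponding genuine Whittaker/Fourier--Jacobi functional for $\wt\pi$ then forces the global integral to decompose as an Euler product over places. At an unramified place $v$, I would substitute the Casselman--Shalika formula for the spherical Whittaker function of $\tau_v$, the known explicit formula for the spherical genuine Whittaker function of $\wt\pi_v$ on $\wt\SL_2(F_v)$, the spherical section $f_{s,v}$, and the characteristic function of $\mathcal{O}_v$ for $\phi_v$. After a suitable change of variables the local integral should reduce to a multiple geometric series in the Satake parameters of $\tau_v$ and $\wt\pi_v$, which resums to the claimed ratio of $L$-factors; the denominator $L(3s-1/2,\tau_v)L(6s-2,\omega_{\tau_v})L(9s-7/2,\tau_v\otimes\omega_{\tau_v})$ is characteristic of the normalizing factors of the intertwining operator associated to the $P'$-Eisenstein series and should appear naturally from the Weyl-denominator identity in the resummation.

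\textbf{Main obstacle.} I expect the genuinely hard step to be the unfolding. The orbit analysis on the non-split group $G_2$ under a parabolic subgroup on one side and a Heisenberg-like subgroup on the other is delicate, and showing that all but the open orbit vanish requires the vanishing mechanisms on $\wt\varphi$, $\tau$, and $\wt\theta_\phi$ to fit together precisely; any single one of them will typically be insufficient. Once the correct open-orbit representative is found and the surviving inner integral is recognized as a product of model-theoretic data, the remaining unramified computation, while technically involved, reduces to a bookkeeping exercise in rational functions of the Satake parameters.
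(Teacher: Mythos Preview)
Your outline is correct and matches the paper's approach closely: unfold the Eisenstein series over double cosets, kill the non-open ones, extract Whittaker data on the survivor, and evaluate the unramified local integral via the Casselman--Shalika/Shintani formula for $\tau_v$ and the Bump--Friedberg--Hoffstein formula for the spherical genuine Whittaker function of $\wt\pi_v$. Two small corrections worth noting: in the paper the non-open cosets (representatives $1$ and $w_\beta w_\alpha$ in $P'\backslash G_2/P$) die using only theta-series oscillation and cuspidality of $\wt\varphi$---cuspidality of $\tau$ is never invoked---and on the open cell $\gamma=(w_\beta w_\alpha)^2$ the theta series is unfolded first, so the surviving integrand is the genuine \emph{Whittaker} function $W_{\wt\varphi}$ (not a Fourier--Jacobi coefficient of $\wt\varphi$) times $\omega_\psi(vg)\phi(1)$ times a $\psi^{-2}$-Whittaker coefficient of $f_s$ along $U_\alpha$.
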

This is Theorem \ref{thm: eulerian} and Proposition \ref{prop: unramified calculation}.  We remark that Ginzburg-Rallis-Soudry gave integral representations for $L$-functions of generic cuspidal representations of $ \wt{\Sp}_{2n}\times \GL_m$ in \cite{GRS98} using symplectic groups. It is still interesting to have different integral representations. As an application of Theorem \ref{thm: main 1}, we show that if $Wd_\psi(\wt \pi)=\chi\otimes \tau$, then a Shimura type period with respect to $\wt \pi$ and the residue of Eisenstein series on $G_2$ is non-vanishing, where $Wd_\psi$ is the Shimura-Waldspurger lift. It is an interesting theme in number theory to investigate the relations between poles of $L$-functions and non-vanishing of automorphic periods. There are many examples of this kind relations. See \cite{JS, Gi93, GRS97} for some examples. The non-vanishing results of automorphic periods have many interesting applications in automorphic forms. We expect the non-vanishing period in our case would be useful on problems related to the residue spectrum of $G_2$.

 There are several known Rankin-Selberg integrals on $G_2$ which represents different $L$-functions and have many applications, see \cite{Gi91, Gi93, Gi95} for example. The integral $I(\wt \varphi,\phi,f_s)$ can be viewed as a dual integral of the standard $G_2$ $L$-function integral in \cite{Gi93} in the following sense. The integral $I(\wt \varphi,\phi,f_s)$ is an integral of a triple product of a cusp form on $\wt\SL_2(\BA)$, a theta series and an Eisenstein series on $G_2(\BA)$, while the integral in \cite{Gi93} is an integral of a triple product of a cusp form on $G_2(\BA)$, a theta series and an Eisenstein series on $\wt \SL_2(\BA)$. The integral in \cite{Gi95} is also in a similar pattern, which is an integral of a triple product of a cusp form on $\SL_2(\BA)$, a theta series and an Eisenstein series on a cover of $G_2(\BA)$. The results presented here were known for D. Ginzburg. But we still think that it might be useful to write up the details. 

\section*{Acknowledgements}
I would like to thank D. Ginzburg for helpful communications and pointing out the reference \cite{Gi95}. The debt of this paper to Ginzburg's papers \cite{Gi93, Gi95} should be evident for the readers. I also would like to thank Joseph Hundley and Baiying Liu for useful discussions. I appreciate Jim Cogdell and Clifton Cunningham for encouragement and support. I also would like to thank the anonymous referee for his/her careful reading and useful suggestions. This work is supported by a fellowship from Pacific Institute for Mathematical Sciences (PIMS) and NSFC grant 11801577.

\section{The group \texorpdfstring{$\RG_2$}{Lg}}

\subsection{Roots and Weyl group for \texorpdfstring{$\RG_2$}{}}
Let $\bG_2$ be the split algebraic reductive group of type $\bG_2$ (defined over $\BZ$). The group $\bG_2$ has two simple roots, the short root $\alpha$ and the long root $\beta$. The set of the positive roots is $\Sigma^+=\wpair{\alpha, \beta, \alpha+\beta, 2\alpha+\beta, 3\alpha+\beta, 3\alpha+2\beta}$.  Let $(~,~)$ be the inner product in the root system and $\pair{~,~}$ be the pair defined by $\pair{\gamma_1,\gamma_2}=\frac{2(\gamma_1,\gamma_2)}{(\gamma_2,\gamma_2)}$. For the root space $\bG_2$, we have the relations:
$$ \pair{\alpha,\beta}=-1, \pair{\beta, \alpha}=-3.$$
For a root $\gamma$, let $s_\gamma$ be the reflection defined by $\gamma$, i.e., $s_\gamma(\gamma')=\gamma'-\pair{\gamma',\gamma}\gamma$. We have the relation
$$s_\alpha(\beta)=3\alpha+\beta, s_\beta(\alpha)=\alpha+\beta.$$
The Weyl group $\bW=\bW(\bG_2)$ of $\bG_2$ has 12 elements, which is explicitly given by 
$$\bW=\wpair{1, s_\alpha, s_\beta, s_\alpha s_\beta, s_\beta s_\alpha, s_\alpha s_\beta s_\alpha, s_\beta s_\alpha s_\beta, (s_\alpha s_\beta)^2, (s_\beta s_\alpha)^2, s_\beta(s_\alpha s_\beta)^2, s_\alpha (s_\beta s_\alpha)^2, (s_\alpha s_\beta)^3}.$$

 For a root $\gamma$, let $U_\gamma \subset G$ be the root space of $\gamma$, and  let $\bx_\gamma: F\ra U_\gamma$ be a fixed isomorphism which satisfies various Chevalley relations, see Chapter 3 of \cite{St}. Among other things, $\bx_{\gamma}$ satisfies the following commutator relations: 
\begin{equation}
\begin{split}\label{eq:commutator}
[\bx_\alpha(x), \bx_\beta(y)]&=  \bx_{\alpha+\beta}(-xy)\bx_{2\alpha+\beta}(-x^2 y) \bx_{3\alpha+\beta}( x^3 y)\bx_{3\alpha+2\beta}(-2x^3 y^2)\\
[\bx_\alpha(x), \bx_{\alpha+\beta}(y)]&= \bx_{2\alpha+\beta}( -2xy)\bx_{3\alpha+\beta}(3x^2 y) \bx_{3\alpha+2\beta}(3xy^2)  \\
[\bx_\alpha(x), \bx_{2\alpha+\beta}(y)]&=\bx_{3\alpha+\beta}( 3xy)  \\
[\bx_\beta(x), \bx_{3\alpha+\beta}(y)]&=\bx_{3\alpha+2\beta}(xy)  \\
[\bx_{\alpha+\beta}(x), \bx_{2\alpha+\beta}(y)]&=\bx_{3\alpha+2\beta}(3xy). 
\end{split}
\end{equation}
For all the other pairs of positive roots $\gamma_1, \gamma_2$, we have $[\bx_{\gamma_1}(x), \bx_{\gamma_2}(y)]=1$. Here $[g_1,g_2]=g_1^{-1}g_2^{-1}g_1g_2$ for $g_1,g_2\in G_2$. For these commutator relationships, see \cite{Re}.

 Following \cite{St}, we denote $w_\gamma(t)= \bx_{\gamma}(t)\bx_{-\gamma}(-t^{-1})\bx_{\gamma}(t)$ and $w_\gamma=w_\gamma(1)$. Note that $w_\gamma$ is a representative of $s_\gamma$. Let $h_\gamma(t)=w_\gamma(t) w_\gamma^{-1}$. Let $T$ be the subgroup of $G$ which consists of elements of the form $h_\alpha(t_1)h_\beta(t_2), t_1,t_2\in T$ and $U$ be the subgroup of $G_2$ generated by $U_\gamma$ for all $\gamma\in \Sigma^+$. Let $B=TU$, which is a Borel subgroup of $G_2$. 
 
For $t_1,t_2\in \mathbb{G}_m,$ denote $h(t_1,t_2)=h_\alpha(t_1t_2)h_\beta(t_1^2t_2)$. From the Chevalley relation $h_{\gamma_1}(t)\bx_{\gamma_2}(r) h_{\gamma_1}(t)^{-1}=\bx_{\gamma_2}(t^{\pair{\gamma_2,\gamma_1}} r)$ (see \cite[Lemma 20, (c)]{St}), we can check the following relations
\begin{equation}
\begin{split}\label{eq:torusactiononroots}
h^{-1}(t_1,t_2)\bx_\alpha(r)h(t_1,t_2)&=\bx_\alpha (t_2^{-1} r), \\
 h^{-1}(t_1,t_2)\bx_\beta(r)h(t_1,t_2)&=\bx_\beta(t_1^{-1}t_2 r)\\
h^{-1}(t_1,t_2)\bx_{\alpha+\beta}(r)  h(t_1,t_2)&=\bx_{\alpha+\beta}(t_1^{-1}r),\\
 h^{-1}(t_1,t_2)\bx_{2\alpha+\beta}(r) h(t_1,t_2)&= \bx_{2\alpha+\beta}(t_1^{-1}t_2^{-1}r) \\
h^{-1}(t_1,t_2)\bx_{3\alpha+\beta}(r)h(t_1,t_2)&=\bx_{3\alpha+\beta}(t_1^{-1}t_2^{-2}r),\\
 h^{-1}(t_1,t_2) \bx_{3\alpha+2\beta}(r) h(t_1,t_2)&=\bx_{3\alpha+2\beta}(t_1^{-2} t_2^{-1} r) .
\end{split}
\end{equation}
Thus the notation $h(a,b)$ agrees with that of \cite{Gi93}.

One can also check that 
\begin{equation*} w_\alpha h(t_1,t_2) w_\alpha^{-1}= h(t_1 t_2, t_2^{-1}), \quad w_\beta h(t_1,t_2) w_\beta^{-1}= h(t_2,t_1).\end{equation*}

\subsection{Subgroups} Let $F$ be a field and denote $G=G_2(F)$. The group $G$ has two proper parabolic subgroups. Let $P=M\ltimes V$ be the parabolic subgroup of $G$ such that $U_\beta\subset M\cong \GL_2$. Thus the unipotent subgroup $V$ is consisting of root spaces of $\alpha, \alpha+\beta, 2\alpha+\beta, 3\alpha+\beta,3\alpha+2\beta$, and a typical element of $V$ is of the form 
$$ \bx_{\alpha}(r_1)\bx_{\alpha+\beta}(r_2)\bx_{2\alpha+\beta}(r_3)\bx_{3\alpha+\beta}(r_4)\bx_{3\alpha+2\beta}(r_5), r_i\in F.$$
To ease the notation, we will write the above element as $[r_1,r_2,r_3,r_4,r_5]$. Denote by $J$ the following subgroup of $P$  
$$J=\SL_2(F)\ltimes V.$$

  Let $ V_1$ (resp. $Z$) be the subgroup of $V$ which consists root spaces of $3\alpha+\beta$ and $3\alpha+2\beta$ (resp. $2\alpha+\beta, 3\alpha+\beta$ and $3\alpha+2\beta$). Note that $P$ and hence $J$ normalizes $V_1$ and $Z$. We will always view $\SL_2(F)$ as a subgroup of $G$ via the inclusion $\SL_2(F)\subset M$. Denote by $A_{\SL_2}$, $N_{\SL_2}$ and $B_{\SL_2}$ the standard torus, the upper triangular unipotent subgroup and the upper triangular Borel subgroup of $\SL_2(F)$. Note that the torus element $h(a,b)$ can be identified with 
$$\begin{pmatrix}a&\\ &b \end{pmatrix}\in \GL_2(F)\cong M,$$
and thus $A_{\SL_2}=\wpair{h(a,a^{-1})|a\in F^\times}$ and $B_{\SL_2}=A_{\SL_2}\ltimes U_\beta$.

Let $P'=M'V'$ be the other maximal parabolic subgroups of $G$ with $U_\alpha$ in the Levi subgroup $M'$. The Levi $M'$ is isomorphic to $\GL_2(F)$, and from relations in (\ref{eq:torusactiononroots}), one can check that one isomorphism $M'\cong \GL_2(F)$ can be determined by 
\begin{equation*}
\begin{split}\bx_\alpha(r)&\mapsto \begin{pmatrix}1& r\\ &1 \end{pmatrix},\\
h(a,b)&\mapsto \begin{pmatrix}ab&\\ &a \end{pmatrix}.
\end{split}\end{equation*}
In particular, we see that $h(a,1)\in T\subset M'$ can be identified with $\diag(a,a).$ Let $\delta_{P'}$ be the modulus character of $P'$. One can check that $\delta_{P'}(m')=|\det(m')|^3$ for $m'\in M'$, where $\det(m')$ can be computed using the above isomorphism $M'\cong \GL_2(F)$.

\subsection{Weil representation of \texorpdfstring{$\widetilde \SL_2(\BA)\ltimes V(\BA)$}{Lg}} In this subsection, we assume that $F$ is a global field and $\BA$ is its ring of adeles.
In $\SL_2(F)$, we denote $t(a)=\diag(a,a^{-1}),a\in F^\times$ and 
$$n(b)=\begin{pmatrix}1& b\\ &1 \end{pmatrix}, b\in F.$$
 Denote $w^1=\begin{pmatrix}&1\\ -1& \end{pmatrix}$, which represents the unique nontrivial Weyl element of $\SL_2(F)$. Under the embedding $\SL_2(F)\subset M\subset G$, the element $w^1$ can be identified with $w_\beta$.

Let $\widetilde \SL_2(\BA)$ be the metaplectic double cover of $\SL_2(\BA)$. Then we have an exact sequence
$$0\ra \mu_2\ra \widetilde \SL_2(\BA) \ra \SL_2(\BA)\ra 0,$$
where $\mu_2=\wpair{\pm 1}$.

We will identify $\SL_2(\BA)$ with the symplectic group of $\BA^2$ with symplectic structure defined by 
$$\pair{(x_1,y_1),(x_2,y_2)}=-2x_1y_2+2x_2y_1.$$

Let $\sH(\BA)$ be the Heisenberg group of the symplectic space $(\BA^2, \pair{~,~})$, i.e., $\sH(\BA)=\BA^3$ with group law
$$(x_1,y_1,z_1)(x_2,y_2,z_2)=(x_1+x_2,y_1+y_2, z_1+z_2-x_1y_2+y_1 x_2).$$

Let $\SL_2(\BA)$ act on $\sH(\BA)$ from the right side by $$(x_1, y_1, z_1).g=((x_1,y_1)g, z_1), g\in \SL_2(\BA),$$
where $(x_1,y_1)g$ is the usual matrix multiplication.

We then can form the semi-direct product $\SL_2(\BA)\ltimes \sH(\BA),$ where the product is defined by $$(g_1, h_1)(g_2,h_2)=(g_1g_2, (h_1.g_2 ) h_2), g_i\in \SL_2(\BA), h_i\in \sH(\BA), i=1,2.$$
Let $\psi$ be a nontrivial additive character of $F\backslash \BA$. Then there is a Weil representation $\omega_\psi$ of $\widetilde \SL_2(\BA)\ltimes \sH(\BA)$. The space of $\omega_\psi$ is $\CS(\BA)$, the Bruhat-Schwartz functions on $\BA$.

For $\phi\in \CS(\BA)$, we have the well-know formulas:
\begin{align*}
(\omega_\psi(n(b))\phi)(x)&=\psi(bx^2)\phi(x), b\in \BA \\
(\omega_{\psi}((r_1,r_2,r_3))\phi)(x)&=\psi\left( r_3-2xr_2-r_1r_2\right) \phi(x+r_1), (r_1,r_2,r_3)\in \sH(\BA),
\end{align*}
The above formulas could be found in \cite{Ku}.\\

Recall that for $r_1,r_2,r_3,r_4,r_5\in \BA$, the notation $[r_1,r_2,r_3,r_4,r_5]$ is an abbreviation of  $$\bx_\alpha(r_1)\bx_{\alpha+\beta}(r_2)\bx_{2\alpha+\beta}(r_3)\bx_{3\alpha+\beta}(r_4) \bx_{3\alpha+2\beta}(r_5)\in V(\BA).$$
Define a map
$\pr: V(\BA)\ra \sH(\BA)$
$$\pr([r_1,r_2,r_3,r_4,r_5])=(r_1,r_2,r_3-r_1 r_2) . $$
From the commutator relation (\ref{eq:commutator}), we can check that $\pr$ is a group homomorphism and defines an exact sequence
$$0\ra  V_1(\BA) \ra V(\BA)\ra \sH(\BA)\ra 0.$$
Recall that $V_1$ is the subgroup of $V$ which is generated by the root space of $3\alpha+\beta, 3\alpha+2\beta$. Note that there is a typo in the formula of the projection map $\pr$ in \cite[p.316]{Gi93}.

For $g=\begin{pmatrix}a & b\\ c& d \end{pmatrix}\in \SL_2(F) \subset M$, we can check that
$$g^{-1}[r_1,r_2,r_3,0,0] g=[r_1', r_2', r_3', r_4',r_5'],$$
where $r_1'=a r_1- cr_2, r_2'=-b r_1 + dr_2, r_3'-r_1'r_2'=r_3-r_1r_2$. 

Consider the map $\overline \pr: J(\BA)=\SL_2(\BA)\ltimes V(\BA)\ra \SL_2(\BA)\ltimes \sH(\BA)$, $$(g,v)\mapsto (g^*, \pr(v)), g\in \SL_2(\BA), v\in V(\BA).$$
where $g^*=\begin{pmatrix} a& -b \\ -c & d \end{pmatrix}=d_1 g d_1^{-1}$, where $d_1=\diag(1,-1)\in \GL_2(F)$. From the above discussion, the map $\ov\pr$ is a group homomorphism and its kernel is also $ V_1(\BA)$. We will also view $\ov{\textrm{pr}}$ as a homormophism $\wt{\SL}_2(\BA)\ltimes V(\BA)\ra \wt{\SL}_2(\BA)\ltimes\sH(\BA)$. 

 In the following, we will also view $\omega_\psi$ as a representation of $\wt{\SL_2}(\BA)\ltimes V(\BA)$ via the projection map $\ov{\pr}$. For $\phi\in \CS(\BA)$, we form the theta series 
$$\wt \theta_\phi(vg)=\sum_{\xi\in F}\omega_\psi(vh)\phi(\xi),v\in V(\BA),g\in \wt\SL_2(\BA).$$

Note that given a genuine cusp form $\wt \varphi$ on $\wt \SL_2(\BA)$, the product 
$$\wt \varphi(g)\wt \theta_\phi(vg),v\in V(\BA),g\in \wt\SL_2(\BA)$$
can be viewed as a function on $J(\BA)=\SL_2(\BA)\ltimes V(\BA)$.

\subsection{An Eisenstein series on \texorpdfstring{$\RG_2$}{Lg}}
Let $\tau$ be a cuspidal automorphic representation on $\GL_2(\BA)$. We will view $\tau$ as a representation of $M'(\BA)$ via the identification $M'\cong \GL_2$. We then consider the induced representation $I(s,\tau)=\Ind_{P'(\BA)}^{\bG_2(\BA)}(\tau\otimes \delta_{P'}^s)$. A section $f_s\in I(s,\tau)$ is a smooth function satisfying
$$f_s(v'm'g)=\delta_{P'}(m')^sf_s(g), \forall v'\in V'(\BA), m'\in M'(\BA),g\in \RG_2(\BA).$$

 For $f_s\in I(s,\tau)$, we consider the Eisenstein series
$$E(g,f_s)=\sum_{\delta\in P'(F)\setminus \bG_2(F)}f_s(\delta g),g\in \bG_2(\BA).$$

\section{A global integral}
Let $\wt\pi$ be a genuine cuspidal automorphic representation on $\wt\SL_2(\BA),$ and $\tau$ be a cuspidal  automorphic representation of $\GL_2(\BA)$. For $\wt \varphi\in V_\pi, \phi\in \CS(\BA)$ and $f_s\in I(s,\tau)$, we consider the integral
$$I(\wt \varphi,\phi,f_s)=\int_{\SL_2(F)\setminus \SL_2(\BA)}\int_{V(F)\setminus V(\BA)}\wt \varphi(g) \wt \theta_\phi(vg) E(vg,f_s)dvdg.$$
Let $\gamma=w_\beta w_\alpha w_\beta w_\alpha\in \bG_2(F)$.
\begin{thm}\label{thm: eulerian}
The integral $I(\wt \varphi,\phi,f_s)$ is absolutely convergent when $\Re(s)\gg 0$ and can be meromorphically continued to all $s\in \BC$. Moreover, when $\Re(s)\gg0$, we have 
$$I(\tilde \varphi,\phi,f_s)=\int_{N_{\SL_2}(\BA)\setminus \SL_2(\BA)}\int_{U_{\alpha+\beta}(\BA)\setminus V(\BA)} W_{\wt \varphi}(g)\omega_\psi(vg)\phi(1)W_{f_s}(\gamma vg)dvdg, $$
where $$ W_{\wt \varphi}(g)=\int_{F\setminus \BA}\wt \varphi(\bx_\beta(r)g)\psi(r)dr,$$ 
and 
$$ W_{f_s}(\gamma vg)=\int_{F\setminus \BA}f_s(\bx_\alpha(r)\gamma vg)\psi(-2r)dr.$$
\end{thm}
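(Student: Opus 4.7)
The plan is to run a standard Rankin--Selberg unfolding on the Eisenstein series $E(vg,f_s)$ and then use the theta series together with the Fourier expansion of $\wt\varphi$ to produce the two Whittaker models on the right-hand side.

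First, for $\Re(s)\gg 0$ the Eisenstein series $E(g,f_s)$ converges absolutely and is of moderate growth, while $\wt\varphi(g)\wt\theta_\phi(vg)$ is rapidly decreasing on $V(F)\backslash V(\BA)$ times a Siegel domain of $\SL_2(F)\backslash \SL_2(\BA)$ (using cuspidality of $\wt\pi$ against the slow growth of $\wt\theta_\phi$). This gives absolute convergence and justifies interchanging the summation defining $E(g,f_s)$ with the integration. Meromorphic continuation in $s$ follows at once from the standard meromorphic continuation of $E(\cdot,f_s)$.

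The next step is the double coset decomposition $P'(F)\backslash G_2(F)/J(F)$, where $J=\SL_2\ltimes V\subset P$. A Bruhat-style enumeration, using that $P$ and $P'$ are maximal and that $\bW(G_2)$ has only $12$ elements, produces a small finite list of representatives; for each representative $\delta$ one rewrites the contribution as
\[
\int_{(J(F)\cap \delta^{-1}P'(F)\delta)\backslash J(\BA)} \wt\varphi(g)\,\wt\theta_\phi(vg)\,f_s(\delta vg)\,dv\,dg.
\]
Every orbit except the open one must be shown to vanish. The vanishing will come from one of two mechanisms: either the inner integration involves a unipotent period of $\wt\varphi$ along a nontrivial subgroup and kills $\wt\varphi$ by cuspidality, or the corresponding character constraint on the theta series together with the fact that $\wt\theta_\phi$ factors through $\ov\pr:V\to\sH$ forces the integral to collapse to zero. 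The representative $\gamma=w_\beta w_\alpha w_\beta w_\alpha$ is selected because its orbit is precisely the one where both the Whittaker functional on $\tau$ along $U_\alpha$ (the ``$\psi(-2r)$'' character) and the Whittaker functional on $\wt\pi$ along $U_\beta$ are supported nontrivially; the stabilizer $J(F)\cap \gamma^{-1}P'(F)\gamma$ contains (up to a shift) the unipotent radical $V_1 U_{\alpha+\beta}$ together with the torus/unipotent of $\SL_2$ needed to absorb the $P'(F)$-quotient.

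On this open orbit one is left with an integral of the form
\[
\int_{U_{\alpha+\beta}(\BA)\backslash V(\BA)}\int_{\SL_2(\BA)'} \wt\varphi(g)\,\wt\theta_\phi(vg)\,f_s(\gamma vg)\,dv\,dg,
\]
over the appropriate homogeneous space. Here one uses the explicit formulas for $\omega_\psi$ on $V(\BA)$ together with $\wt\theta_\phi(vg)=\sum_{\xi\in F}\omega_\psi(vg)\phi(\xi)$: the sum over $\xi\in F$ collapses one adelic integral to a local one, leaving the value $\omega_\psi(vg)\phi(1)$ after a change of variables absorbing the $F^\times$-orbit. Finally, the remaining integration in $g$ over a quotient still containing $U_\beta(F)\backslash U_\beta(\BA)$ unfolds the Fourier coefficient of $\wt\varphi$ against the character $\psi$ (using that $U_\beta\subset N_{\SL_2}$), producing $W_{\wt\varphi}(g)$ and reducing the domain to $N_{\SL_2}(\BA)\backslash \SL_2(\BA)$; similarly, the $P'$-quotient absorbs one copy of $U_\alpha(F)\backslash U_\alpha(\BA)$ from $V$, which unfolds against $f_s$ with the character $\psi(-2r)$ (compatible with the commutator relation in \eqref{eq:commutator} and the conjugation by $\gamma$) to yield $W_{f_s}(\gamma vg)$.

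The main obstacle I anticipate is the orbit analysis: one must verify that the coset representatives other than the class of $\gamma$ contribute zero, which requires chasing the conjugates $\delta^{-1} P'(F)\delta \cap J(F)$ through the commutator relations \eqref{eq:commutator} and the torus action \eqref{eq:torusactiononroots}, and invoking cuspidality of $\wt\pi$ (or the vanishing of a theta period along a non-isotropic subgroup coming from the kernel structure of $\ov\pr$) in each non-open case. Once this combinatorial bookkeeping is in place, the identification of the $\psi$-character arising on $U_\beta$ and of the $\psi(-2\cdot)$-character arising on $U_\alpha$ is a direct consequence of the commutator formulas, so the final identity takes the stated form.
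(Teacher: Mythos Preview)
Your plan is essentially the paper's proof, and the two vanishing mechanisms you name (cuspidality of $\wt\varphi$ and a nontrivial $\psi$-character coming from the theta series) are exactly what the paper uses. A few corrections to the details you sketch: the double coset set is $P'(F)\backslash G_2(F)/P(F)$ with precisely three representatives $\{1,\ w_\beta w_\alpha,\ \gamma\}$; for $\delta=1$ the theta character on $U_{2\alpha+\beta}$ kills the term, while for $\delta=w_\beta w_\alpha$ one splits $\wt\theta_\phi$ into the $\xi=0$ piece (killed by cuspidality via $U_\beta$) and the $\xi\in F^\times$ piece (killed by the theta character on $U_{\alpha+\beta}$). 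On the open cell the stabilizer in $J$ is $B_{\SL_2}\ltimes U_{\alpha+\beta}$, not $V_1 U_{\alpha+\beta}$: one checks $\gamma(\alpha+\beta)=\alpha$ while $\gamma$ sends every other root of $V$ to a negative root $\ne -\alpha$. You also need to split the theta series once more on the open cell and kill the $\xi=0$ term by cuspidality before rewriting $\sum_{\xi\in F^\times}$ as $\sum_{a\in F^\times}\omega_\psi(t(a)\cdot)\phi(1)$ and collapsing it against the $A_{\SL_2}(F)$-integration. Finally, $W_{f_s}$ comes from the remaining compact integral over $U_{\alpha+\beta}(F)\backslash U_{\alpha+\beta}(\BA)$ via $\gamma\,\bx_{\alpha+\beta}(r)\,\gamma^{-1}=\bx_\alpha(r)\in M'$, not from ``absorbing a copy of $U_\alpha$ from $V$''.
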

\begin{proof}
The first assertion is standard. We only show that the above integral is Eulerian when $\Re(s)\gg0$. Unfolding the Eisenstein series, we can get 
$$I(\wt \varphi,\phi,f_s)=\sum_{\delta\in P'(F)\setminus \bG_2(F)/P(F)}\int_{\SL_2^\delta(F)\setminus \SL_2(\BA)}\int_{V^\delta(F)\setminus V(\BA)}\wt \varphi(g)\wt \theta_\phi(vg)f_s(\delta vg)dvdg,$$
where  $X^\delta=\delta^{-1} P'\delta\cap X$ for $X\subset G_2(F)$. We can check that a set of representatives of the double coset $P'(F)\setminus \bG_2(F)/P(F)$ can be taken as $\wpair{1,w_\beta w_\alpha, \gamma= w_\beta w_\alpha w_\beta w_\alpha}.$ For $\delta=1, w_\beta w_\alpha,$ or $ \gamma=w_\beta w_\alpha w_\beta w_\alpha$, denote 
$$I_\delta=\int_{\SL_2^\delta(F)\setminus \SL_2(\BA)}\int_{V^\delta(F)\setminus V(\BA)}\wt \varphi(g)\wt \theta_\phi(vg)f_s(\delta vg)dvdg.$$

If $\delta=1$, the above integral $I_\delta$ has an inner integral 
$$\int_{U_{2\alpha+\beta}(F)\setminus U_{2\alpha+\beta}(\BA)}\wt \theta_\phi(\bx_{2\alpha+\beta}(r)vg)f_s(\bx_{2\alpha+\beta}(r)vg)dr,$$
which is zero because $f_s(\bx_{2\alpha+\beta}(r)vg)=f_s(vg) , \wt \theta_\phi(\bx_{2\alpha+\beta}(r)vg)=\psi(r)\tilde \theta_\phi(vg)$ and $\int_{F\setminus \BA}\psi(r)dr=0.$ The last equation follows from the fact that $\psi$ is non-trivial.

We next consider the term when $\delta=w_\beta w_\alpha$. We write $$\wt \theta_\phi(vg)=\omega_\psi(vg)\phi(0)+\sum_{\xi\in F^\times}\omega_\psi(vg)\phi(\xi).$$ The contribution of the first term to the integral $I_{\delta}$ is 
$$\int_{\SL_2^\delta(F)\setminus \SL_2(\BA)}\int_{V^\delta(F)\setminus V(\BA)}\wt \varphi(g)\omega_\psi(vg)\phi(0)f_s(\delta vg)dvdg. $$
Note that $\delta \bx_\beta(r)\delta^{-1}\subset U_{2\alpha+\beta}\subset V'$, we have $f_s(\delta v \bx_\beta(r)g)=f_s(\delta \bx_\beta(-r)v\bx_\beta(r)g)$. On the other hand, we have $\omega_\psi(\bx_\beta(r)vg)\phi(0)=\omega_\psi(vg)\phi(0)$ . After a changing variable on $v$, we can see that the above integral contains an inner integral 
$$\int_{F\setminus \BA}\wt \varphi(\bx_\beta(r)vg)dr,$$
which is zero since $\wt \varphi$ is cuspidal. Thus the contribution of the term $\omega_\psi(vg)\phi(0)$ is zero when $\delta=w_\beta w_\alpha$. The contribution of $\sum_{\xi\in F^\times}\omega_\psi(vg)\phi(\xi) $ is 
$$\int_{\SL_2^\delta(F)\setminus \SL_2(\BA)}\int_{V^\delta(F)\setminus V(\BA)}\wt \varphi(g) \sum_{\xi\in F^\times}\omega_\psi(vg)\phi(\xi)f_s(\delta vg)dvdg. $$
We consider the inner integral on $U_{\alpha+\beta}(F)\setminus U_{\alpha+\beta}(\BA)$. Note that $U_{\alpha+\beta}\subset V$ and $\delta U_{\alpha+\beta} \delta^{-1}=U_{2\alpha +\beta}\subset V'$, we get $f_s(\delta \bx_{\alpha+\beta}(r)vg)=f_s(\delta vg)$. On the other hand, we have 
$\omega_{\psi}(\bx_{\alpha+\beta}(r)vg)\phi(\xi)=\psi(-2r\xi)\omega_\psi(vg)\phi(\xi)$. Thus the above integral has an inner integral
$$\int_{F\setminus \BA}\sum_{\xi\in F^\times}\psi(-2r\xi)\omega_\psi(vg)\phi(\xi)dr=\sum_{\xi\in F^\times}\omega_\psi(vg)\phi(\xi)\int_{F\setminus \BA}\psi(-2r\xi)dr=0.$$
Thus when $\delta=w_\beta w_\alpha$, the corresponding term is zero.  Thus we get
$$I(\wt \varphi,\phi,f_s)=\int_{\SL_2^\gamma(F)\setminus \SL_2(\BA)}\int_{V^\gamma(F)\setminus V(\BA)}\wt \varphi(g)\wt \theta_\phi(vg)f_s(\gamma vg)dvdg.$$  We have $\SL_2^\gamma=B_{\SL_2}$ and $V^\gamma=U_{\alpha+\beta}$. We decompose $\wt \theta_\phi$ as 
 $$\wt \theta_\phi(vg)=\omega_\psi(vg)\phi(0)+\sum_{\xi\in F^\times}\omega_\psi(vg)\phi(\xi)=\omega_\psi(vg)\phi(0)+\sum_{a\in F^\times}\omega_\psi(t(a)vg)\phi(1).$$
Recall that $t(a)=\diag(a,a^{-1})$. Since $\gamma U_\beta \gamma^{-1}\subset U_{3\alpha+\beta}\subset V'$, we have 
$$f_s(\gamma v\bx_\beta(r)g)=f_s(\gamma \bx_\beta(-r)v\bx_\beta(r)g). $$
On the other hand we have $\omega_\psi(v\bx_\beta(r)g)\phi(0)=\omega_\psi(\bx_\beta(-r)v\bx_\beta(r)g)\phi(0) $. Thus after a changing variable on $v$, we can get that the contribution of $\omega_\psi(vg)\phi(0)$ to $I(\wt \varphi,\phi,f_s)$ has an inner integral 
$$\int_{F\setminus \BA}\wt \varphi(\bx_\beta(r)g)dr,$$
which is zero by the cuspidality of $\wt \varphi$. Thus we get 
$$I(\wt \varphi,\phi,f_s)=\int_{B_{\SL_2}(F)\setminus \SL_2(\BA)}\int_{U_{\alpha+\beta}(F)\setminus V(\BA)}\wt \varphi(g)\sum_{a\in F^\times}\omega_\psi(t(a)vg)\phi(1)f_s(\gamma vg)dvdg.$$ 
Collapsing the summation with the integration, we then get 
\begin{align*}&\quad I(\wt \varphi,\phi,f_s)\\
&=\int_{N_{\SL_2}(F)\setminus \SL_2(\BA)}\int_{U_{\alpha+\beta}(F)\setminus V(\BA)}\wt \varphi(g)\omega_\psi(vg)\phi(1)f_s(\gamma vg)dvdg\\
&=\int_{N_{\SL_2}(\BA)\setminus \SL_2(\BA)}\int_{U_{\alpha+\beta}(F)\setminus V(\BA)}\int_{F\setminus \BA}\wt \varphi(\bx_\beta(r)g)\omega_\psi(v\bx_\beta(r)g)\phi(1)f_s(\gamma v\bx_\beta(r)g)drdvdg.
\end{align*}
Note that we have $\omega_\psi(v\bx_\beta(r)g)\phi(1)=\omega_\psi(\bx_\beta(r)\bx_\beta(-r)v\bx_\beta(r)g)\phi(1)=\psi(r) \omega_\psi(\bx_\beta(-r)v\bx_\beta(r)g)\phi(1)$. On the other hand, we have $\gamma \bx_\beta(r)\gamma^{-1}\subset U_{3\alpha+\beta}\subset V'$. Thus $f_s(\gamma v\bx_\beta(r)g)=f_s(\gamma \bx_\beta(-r)v\bx_\beta(r)g) .$ After a changing of viarable on $v$, we get 
$$I(\wt \varphi,\phi,f_s)=\int_{N_{\SL_2}(\BA)\setminus \SL_2(\BA)}\int_{U_{\alpha+\beta}(F)\setminus V(\BA)}W_{\wt \varphi}(g)\omega_\psi(vg)\phi(1)f_s(\gamma vg)dvdg,$$
where 
$$ W_{\wt \varphi}(g)=\int_{F\setminus \BA}\tilde \varphi(\bx_\beta(r)g)\psi(r)dr.$$ 
We can further decompose the above integral as 
\begin{align*}
&I(\wt \varphi,\phi,f_s)\\
=&\int_{N_{\SL_2}(\BA)\setminus \SL_2(\BA)}\int_{U_{\alpha+\beta}(\BA)\setminus V(\BA)}\int_{F\setminus \BA}W_{\wt \varphi}(g)\omega_\psi(\bx_{\alpha+\beta}(r)vg)\phi(1)f_s(\gamma \bx_{\alpha+\beta}(r)vg)dr dvdg.
\end{align*}
Note that $\omega_\psi(\bx_{\alpha+\beta}(r)vg)\phi(1)=\psi(-2r)\omega_\psi(vg)\phi(1) $ and $ f_s(\gamma \bx_{\alpha+\beta}(r)vg)=f_s(\bx_\alpha(r)\gamma vg)$ since $\gamma \bx_{\alpha+\beta}(r)\gamma^{-1}=\bx_\alpha(r)$. We then get 
$$I(\wt \varphi,\phi,f_s)=\int_{N_{\SL_2}(\BA)\setminus \SL_2(\BA)}\int_{U_{\alpha+\beta}(\BA)\setminus V(\BA)} W_{\wt \varphi}(g)\omega_\psi(vg)\phi(1)W_{f_s}(\gamma vg)dvdg, $$
where $$ W_{f_s}(\gamma vg)=\int_{F\setminus \BA}f_s(\bx_\alpha(r)\gamma vg)\psi(-2r)dr.$$
This concludes the proof.
\end{proof}

\section{Unramified calculation}\label{sec:unramified}
In this section, let $F$ be a $p$-adic field with $p\ne 2$. Let $\fo$ be the ring of integers of $F$, and let $p$ be a uniformizer of $\fo$ by abuse of notation. Let $q$ be the cardinality of the residue field $\fo/(p)$.

\subsection{Local Weil representations}
Let $\psi$ be an additive character of $F$ and let $\gamma(\psi)$ be the Weil index and let $\mu_\psi(a)=\frac{\gamma(\psi)}{\gamma(\psi_a)}$. Let $\omega_\psi$ be the Weil representation of $\wt\SL_2(F)\ltimes V$ on $\CS(F)$ via the projection $\wt \SL_2(F)\ltimes V\ra \wt \SL_2(F)\ltimes \sH$.
For $\phi\in \CS(F)$, we have the well-know formulas:
\begin{align*}
(\omega_{\psi}(w^1)\phi)(x)&=\gamma(\psi)\hat \phi(x),\\
(\omega_\psi(n(b))\phi)(x)&=\psi(bx^2)\phi(x), b\in F\\
(\omega_\psi(t(a))\phi)(x)&=|a|^{1/2} \mu_\psi(a)\phi(ax), a\in F^\times\\
(\omega_{\psi}((r_1,r_2,r_3))\phi)(x)&=\psi\left( r_3-2xr_2-r_1r_2\right) \phi(x+r_1), (r_1,r_2,r_3)\in \sH(F).
\end{align*}
where  $\hat \phi(x)=\int_F \phi(y)\psi(2xy)dy$ is the Fourier transform of $\phi$ with respect to $\psi$. Note that under the embedding $\SL_2(F)\incl \bG_2(F)$, we have $w^1=w_\beta,n(b)=\bx_\beta(b)$ and $t(a)=h(a,a^{-1})$.

\subsection{Unramified calculation}
In this subsection, we compute the local integral in last section. The strategy is similar as the unramified calculation in \cite{Gi95}. 

Let $\wt \pi$ be an unramified genuine representation of $\wt \SL_2(F)$ with Satake parameter $a$, and let $\tau$ be an unramified irreducible representation of $\GL_2(F)$ with Satake parameters $b_1,b_2$. Let $\wt W\in \CW(\tilde \pi, \psi)$ with $\wt W(1)=1$. Let $v_0\in V_\tau$ be an unramified vector and $\lambda\in \Hom_{N}(V_\tau,\psi)$ such that $\lambda(v_0)=1$. Let $f_s:G_2\ra V_\tau$ be the unramified section in $I(s,\tau)$ with $f_s(e)=v_0$. Let $$W_{f_s}:G_2\times \GL_2(F)\ra \BC$$
be the function $W_{f_s}(g,a)=\lambda(\tau(a)f_s(g)).$ We will write $W_{f_s}(g)$ for $W_{f_s}(g,1)$ in the following. By assumption and Shintani formula, we have 
\begin{align}
\begin{split}\label{eq: shintani}
W_{f_s}(h(p^k,p^l))&=q^{-3s(2k+l)}\lambda(\tau(\diag(p^{k+l},p^k))v_0)\\
&=q^{-3s(2k+l)}W_{v_0}(\diag(p^{k+l},p^k))\\
&=\left\{ \begin{array}{lll}q^{-3s(2k+l)}\frac{(b_1b_2)^kq^{-l/2}}{b_1-b_2}(b_1^{l+1}-b_2^{l+1}), & \textrm{ if } l\ge 0,\\ 0, & \textrm{ if } l<0. \end{array} \right.
\end{split}
\end{align}

Let $\phi\in \CS(F)$ be the characteristic function of $\fo$. We need to compute the integral 
$$I(\wt W, W_{f_s},\phi)=\int_{N_2\setminus \SL_2(F)}\int_{U_{\alpha+\beta}\setminus V}\wt W(g)\omega_\psi(vg)\phi(1)W_{f_s}(\gamma vg)dvdg.$$
In the following, we fix the Haar measure such that $\vol(dr,\fo)=1$. Thus $\vol(d^*r,\fo^\times)=1-q^{-1}$.

Using the Iwasawa decomposition $\SL_2(F)=N_2(F)A_2(F)\SL_2(\fo)$, we have 
\begin{align*}&I(\wt W, W_{f_s},\phi)\\
=&\int_{F^\times}\int_{F^4}\wt W(t(a))\omega_\psi([r_1,0,r_3]t(a))\phi(1)W_{f_s}(\gamma (r_1,0,r_3,r_4,r_5)t(a))|a|^{-2}dr_1dr_3dr_4dr_5 d^\times a\\
=&\int_{F^\times}\int_{F^4}\wt W(t(a))\omega_\psi(t(a)[r_1,0,r_3])\phi(1)W_{f_s}(\gamma t(a) (r_1,0,r_3,r_4,r_5))|a|^{-3}dr_1dr_3dr_4dr_5 d^\times a
\end{align*}
If $\wt W(t(a))\ne 0$, then $|a|\le 1$. On the other hand, we have 
$$\omega_\psi(t(a)[r_1,0,r_3])\phi(1)=\mu_\psi(a)|a|^{1/2}\psi(r_3)\phi(a+r_1).$$
If $\phi(a+r_1)\ne 0$ and $a\in \fo$, then $r_1\in \fo$. Thus the domain for $a$ and $r_1$ in the above integral is $\wpair{a\in F^\times\cap \fo,r_1\in \fo}$. Note that $\gamma t(a)=h(1,a)\gamma=h(1,a)w_\beta w_\alpha w_\beta w_\alpha$. Thus, if we conjugate $w_\alpha \bx_\alpha(r_1)$ to the right side, we can get 
$$ h(1,a)\gamma [r_1,0,r_3,r_4,r_5]=h(1,a)w_\beta w_\alpha w_\beta \bx_{\alpha+\beta}(-r_3)\bx_\beta(-r_4-3r_1r_3)\bx_{3\alpha+2\beta}(r_5)w_\alpha \bx_\alpha(r_1).$$
Since $w_\alpha \bx_\alpha(r_1)\in K$ for $r_1\in \fo$, by changing of variables, we get 
\begin{align*}
   &I(\wt W, W_{f_s},\phi)\\
=&\int_{|a|\le 1}\wt W(t(a))|a|^{-5/2}\mu_\psi(a) \\
&\quad \cdot \int_{F^3} W_{f_s}(h(1,a)w_\beta w_\alpha w_\beta \bx_{\alpha+\beta}(r_3)\bx_\beta(r_4)\bx_{3\alpha+2\beta}(r_5))\psi(-r_3)dr_3dr_4dr_5 d^*a\\
=&\sum_{n\ge 0} \wt W(t(p^n)) q^{5n/2}\mu_\psi(p^n) J(n),
\end{align*}
where 
$$J(n)= \int_{F^3} W_{f_s}(h(1,p^n)w_\beta w_\alpha w_\beta \bx_{\alpha+\beta}(r_3)\bx_\beta(r_4)\bx_{3\alpha+2\beta}(r_5))\psi(-r_3)dr_3dr_4dr_5.$$

By dividing the domain of $r_3$ into two parts, we can write $J(n)=J_1(n)+J_2(n)$, where 
\begin{align*}J_1(n)&=\int_{|r_3|\le 1}\int_{F^2} W_{f_s}(h(1,p^n)w_\beta w_\alpha w_\beta \bx_{\alpha+\beta}(r_3)\bx_\beta(r_4)\bx_{3\alpha+2\beta}(r_5))\psi(-r_3)dr_3dr_4dr_5\\
&=\int_{F^2} W_{f_s}(h(1,p^n)w_\beta w_\alpha w_\beta \bx_\beta(r_4)\bx_{3\alpha+2\beta}(r_5))dr_4dr_5,
\end{align*}
and 
$$J_2(n)=\int_{|r_3|> 1}\int_{F^2} W_{f_s}(h(1,p^n)w_\beta w_\alpha w_\beta \bx_{\alpha+\beta}(r_3)\bx_\beta(r_4)\bx_{3\alpha+2\beta}(r_5))\psi(-r_3)dr_3dr_4dr_5.$$

\begin{lem}
Set 
$$I(n)=\int_F W_{f_s}(h(1,p^n)w_\beta \bx_\beta(r))dr.$$
Then
\begin{align*}I(n)&=\frac{q^{-(3s+1/2)n}}{b_1-b_2}\left[(b_1^{n+1}-b_2^{n+1}) \right.\\
&\left. +(1-q^{-1})\frac{b_1b_2X}{(1-b_1X)(1-b_2X)}(b_1^n-b_2^n-b_1^{n+1}X+b_2^{n+1}X+b_1X(b_1b_2X)^n-b_2X(b_1b_2X)^n)\right],
\end{align*}
 where $X=q^{-(3s-3/2)}$.
\end{lem}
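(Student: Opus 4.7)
The strategy is to split the $r$-integral into the two regions $|r|\le 1$ and $|r|>1$, using in each case the Iwasawa decomposition of $w_\beta\bx_\beta(r)\in\SL_2(F)\subset M$ together with three features of $W_{f_s}$: right $K$-invariance (because $f_s$ is spherical), left invariance under $V'$ (because $V'$ acts trivially in the induced representation), and the Shintani formula recalled in \eqref{eq: shintani}. The key point is that $U_\beta\subset V'$, so any $\bx_\beta(\ast)$ appearing on the \emph{left} after an Iwasawa decomposition can be discarded. Once we reach an element of the form $h(a,b)\kappa$ with $\kappa\in K$, right $K$-invariance and Shintani do the rest.

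For $|r|\le 1$ we have $\bx_\beta(r)\in K$ and $w_\beta\in K$, so right $K$-invariance gives $W_{f_s}(h(1,p^n)w_\beta\bx_\beta(r))=W_{f_s}(h(1,p^n))$, which by Shintani equals $\frac{q^{-(3s+1/2)n}}{b_1-b_2}(b_1^{n+1}-b_2^{n+1})$; this contributes the first bracketed term of the stated formula (with measure $\vol(\fo)=1$). For $|r|>1$ I would use the standard $\SL_2$ Iwasawa decomposition
$$w_\beta\bx_\beta(r)=\bx_\beta(-r^{-1})\,h(-r^{-1},-r)\,\kappa(r),\quad \kappa(r)=\bpm 1 & 0\\ r^{-1} & 1\epm\in K,$$
and then push $h(1,p^n)$ through the left $\bx_\beta$ using the torus action in \eqref{eq:torusactiononroots}, obtaining $h(1,p^n)w_\beta\bx_\beta(r)=\bx_\beta(-p^{-n}r^{-1})\,h(-r^{-1},-p^nr)\,\kappa(r)$. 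Since $\bx_\beta(-p^{-n}r^{-1})\in V'$ and $\kappa(r)\in K$, I can drop both and apply Shintani to $h(-r^{-1},-p^nr)$; writing $r=p^{-m}v$ with $m\ge 1$, $v\in\fo^\times$, and absorbing $v$ into the right $K$ factor, the integrand reduces to $q^{-3s(n+m)}W_{v_0}(\diag(p^n,p^m))$, which is nonzero iff $m\le n$.

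Integrating in shells $r\in p^{-m}\fo^\times$ (measure $q^m(1-q^{-1})$) then gives
$$I_2(n)=\frac{(1-q^{-1})\,q^{-(3s+1/2)n}}{b_1-b_2}\sum_{m=1}^{n}(b_1b_2X)^m\bigl(b_1^{n-m+1}-b_2^{n-m+1}\bigr),\qquad X=q^{-(3s-3/2)}.$$
Summing the two geometric series (after separating into $b_1^{\,\cdot}$ and $b_2^{\,\cdot}$ pieces) yields, after combining over the common denominator $(1-b_1X)(1-b_2X)$, exactly the bracketed expression in the statement. Adding $I_1(n)+I_2(n)$ gives the claimed formula.

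The only real obstacle is bookkeeping: keeping the two identifications $M\cong \GL_2$ (where $h(a,b)\leftrightarrow\diag(a,b)$) and $M'\cong \GL_2$ (where $h(a,b)\leftrightarrow \diag(ab,a)$) straight, so that the power of $\delta_{P'}^s$ matches the shift needed to make the Shintani formula output $W_{v_0}(\diag(p^n,p^m))$. The rest of the computation is routine: each manipulation is an instance either of the commutator/torus relations already listed in the setup or of right $K$-invariance of the spherical Whittaker function on $\GL_2$.
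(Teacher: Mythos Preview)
Your proposal is correct and follows essentially the same route as the paper: split the integral at $|r|=1$, use $w_\beta\bx_\beta(r)\in K$ for $|r|\le 1$, and for $|r|>1$ use the $\SL_2$-Iwasawa decomposition $w_\beta\bx_\beta(r)=\bx_\beta(-r^{-1})h(-r^{-1},-r)\bx_{-\beta}(r^{-1})$ together with $U_\beta\subset V'$ and right $K$-invariance, then sum over shells $r\in p^{-m}\fo^\times$ and apply the Shintani formula. The only cosmetic difference is that the paper first peels off the central factor $h(p^m,1)\in Z(M')$ (contributing $q^{-6sm}\omega_\tau(p)^m$) and then applies Shintani to $h(1,p^{n-m})$, whereas you apply \eqref{eq: shintani} directly to $h(p^m,p^{n-m})$; the resulting finite sums $\sum_{m=1}^n X^m(b_1^{n+1}b_2^m-b_2^{n+1}b_1^m)$ and $\sum_{m=1}^n (b_1b_2X)^m(b_1^{n-m+1}-b_2^{n-m+1})$ are visibly identical.
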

\begin{proof}
We have 
\begin{align*}
I(n)&=\int_F W_{f_s}(h(1,p^n)w_\beta \bx_\beta(r))dr\\
&=\int_{|r|\le 1}W_{f_s}(h(1,p^n)w_\beta \bx_\beta(r))dr\\
&+\int_{|r|> 1}W_{f_s}(h(1,p^n)w_\beta \bx_\beta(r))dr\\
&=W_{f_s}(h(1,p^n))+\int_{|r|> 1}W_{f_s}(h(1,p^n)w_\beta \bx_\beta(r))dr.
\end{align*}
To deal with the integral when $|r|>1$, we consider the following Iwasawa decomposition of $w_\beta \bx_\beta(r)$:
$$w_\beta \bx_\beta(r)=\bx_\beta(-r^{-1})h(-r^{-1},-r)\bx_{-\beta}(r^{-1}).$$
Since $\bx_{-\beta}(r^{-1})$ is in the maximal compact subgroup for $|r|>1$, we have 
$$W_{f_s}(h(1,p^n)w_\beta \bx_\beta(r))=W_{f_s}(h(1,p^n)\bx_{\beta}(-r^{-1})h(-r^{-1},-r))=W_{f_s}(h(1,p^n)h(r^{-1},r)),$$
where we used $U_\beta\subset V'$. For $|r|>1$, we can write $r= p^{-m}u$ for some $m\ge 1$ and $u\in \fo^\times$. We then have $dr=q^{m}du$. Note that $\vol(\fo^\times)=1-q^{-1}$. Thus we have 
$$I(n)=W_{f_s}(h(1,p^n))+\sum_{m\ge 1}(1-q^{-1})q^mW_{f_s}(h(p^m,p^{n-m})).$$
Note that $h(p^m,1)\mapsto \diag(p^m,p^m)$ under the isomorphism $M'\cong \GL_2$. Thus we have 
$$ W_{f_s}(h(p^m,1)h(1,p^{n-m}))=q^{-6sm}\omega_\tau(p)^mW_{f_s}(h(1,p^{n-m})).$$
Thus we get 
$$I(n)=W_{f_s}(h(1,p^n))+\sum_{m\ge 1}(1-q^{-1})q^{(-6s+1)m}\omega_\tau(p)^mW_{f_s}(h(1,p^{n-m})).$$
By \eqref{eq: shintani}, we have
\begin{align*}
W_{f_s}(h(1,p^{n-m}))&=\left\{ \begin{array}{lll} \frac{q^{-3s(n-m)-(n-m)/2}}{b_1-b_2}(b_1^{n-m+1}-b_2^{n-m+1}), & \textrm{ if } n\ge m,\\ 0, & \textrm{ if } n<m. \end{array} \right.
\end{align*}
 Thus for $n\ge 1$, we have
\begin{align*}
I(n)&=\frac{q^{-(3s+1/2)n}}{b_1-b_2}\left((b_1^{n+1}-b_2^{n+1})+\sum_{m= 1}^n(1-q^{-1})q^{-(3s-3/2)m}(b_1^{n+1}b_2^m-b_2^{n+1}b_1^m)\right).
\end{align*}
Thus result can be computed using the geometric summation formula. One can check that the given formula also satisfies $I(0)=1$.
\end{proof}

\begin{lem}
We have $$J_1(n)=\frac{1-q^{-6s+1}b_1b_2}{1-q^{-6s+2}b_1b_2}I(n).$$
\end{lem}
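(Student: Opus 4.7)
The plan is to recast $J_1(n)$ as a double integral in two negative-root variables, split the domain at a critical size, identify one part with $I(n)$ directly, and evaluate the other as a geometric series. First, exploiting that $w_\beta w_\alpha w_\beta$ represents $s_{\alpha+\beta}$ (since $s_\beta s_\alpha s_\beta = s_{\alpha+\beta}$), which sends $\beta\mapsto -(3\alpha+2\beta)$ and $3\alpha+2\beta\mapsto -\beta$, I will conjugate the two unipotent factors through $w_\beta w_\alpha w_\beta$, drop the Weyl element via right $K$-invariance (as $w_\beta w_\alpha w_\beta\in K$), commute $h(1,p^n)$ rightward past the resulting $\bx_{-(\cdot)}$ factors (which mutually commute, as $-3\alpha-3\beta$ is not a root), and change variables $a=\pm p^{-n}r_4$, $b=\pm p^n r_5$ (unit Jacobian) to obtain
$$J_1(n)=\int_F\int_F W_{f_s}\!\left(\bx_{-(3\alpha+2\beta)}(a)\bx_{-\beta}(b)h(1,p^n)\right) da\, db.$$

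Next I will use $\bx_{-(3\alpha+2\beta)}(a)\bx_{-\beta}(b)h(1,p^n)=\bx_{-\beta}(b)h(1,p^n)\bx_{-(3\alpha+2\beta)}(p^n a)$ to split at $|a|=q^n$. For $|a|\le q^n$ the trailing factor lies in $K$ and is killed by right $K$-invariance; combined with the parallel identity $I(n)=q^n\int_F W_{f_s}(\bx_{-\beta}(b)h(1,p^n))\,db$ (derived from $w_\beta\bx_\beta(r)=\bx_{-\beta}(-r)w_\beta$ and the change of variable $b=-p^n r$), this region contributes exactly $q^n\cdot q^{-n}I(n)=I(n)$.

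For $|a|>q^n$, set $c=p^n a$ with $|c|=q^L$, $L\ge 1$. The $SL_2$-Iwasawa in the $SL_2$ generated by $\bx_{\pm(3\alpha+2\beta)}$ gives $\bx_{-(3\alpha+2\beta)}(c)=\bx_{3\alpha+2\beta}(c^{-1})\,h(\pm c^{-1},1)\,k_1$ with $k_1\in K$, where $h_{3\alpha+2\beta}(s)=h(\pm s,1)$ follows from $\pair{\alpha,3\alpha+2\beta}=0$. After dropping $k_1$ by right $K$-invariance, I commute $\bx_{3\alpha+2\beta}(c^{-1})$ leftward past $h(1,p^n)$ and then past $\bx_{-\beta}(b)$, producing $\bx_{3\alpha+2\beta}(p^n c^{-1})$ together with a further factor $\bx_{3\alpha+\beta}(\pm b p^n c^{-1})$ via the Chevalley commutator (since $-\beta+(3\alpha+2\beta)=3\alpha+\beta$ is a root). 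Both residual factors lie in $V'$ and are absorbed by left $V'$-invariance of $W_{f_s}$, yielding
$$W_{f_s}\!\left(\bx_{-(3\alpha+2\beta)}(a)\bx_{-\beta}(b)h(1,p^n)\right)=W_{f_s}\!\left(\bx_{-\beta}(b)h(\pm c^{-1},p^n)\right).$$
I then evaluate the $b$-integral by the same Iwasawa-plus-Shintani scheme used in the preceding lemma (split $|b|\le 1$ versus $|b|>1$, apply the $SL_2$-Iwasawa for $\bx_{-\beta}$ on the latter, drop the resulting $\bx_\beta\in V'$ factor by left invariance, and apply the Shintani formula at the shifted diagonals); the finite $M$-sum produced is exactly the bracketed expression for $(b_1-b_2)q^{(3s+1/2)n}I(n)$, multiplied by a clean $(b_1 b_2)^L$ factor from the torus shift, giving
$$\int_F W_{f_s}\!\left(\bx_{-\beta}(b)h(\pm c^{-1},p^n)\right) db=q^{-n}(q^{1-6s}b_1 b_2)^L\, I(n).$$

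Finally, integrating over $|a|=q^{n+L}$ for $L\ge 1$ with measure $(1-q^{-1})q^{n+L}$ yields the geometric series
$$(1-q^{-1})\sum_{L\ge 1}(q^{2-6s}b_1 b_2)^L\, I(n)=\frac{(q-1)q^{1-6s}b_1 b_2}{1-q^{2-6s}b_1 b_2}I(n),$$
and adding the $I(n)$ from the easy region produces the claimed identity $J_1(n)=\frac{1-q^{1-6s}b_1 b_2}{1-q^{2-6s}b_1 b_2}I(n)$ after simplifying the numerator $1-q^{2-6s}b_1 b_2+(q-1)q^{1-6s}b_1 b_2=1-q^{1-6s}b_1 b_2$. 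The main obstacle is the detailed $b$-integral in the hard region: matching its finite $M$-sum to the closed form for $I(n)$ requires the same careful bookkeeping of the Shintani formula at a shifted torus that appeared in the preceding lemma, and verifying that the $L$-dependence combines with the $q^{L-n}$ Jacobian from the torus-adjusted change of variable $b\mapsto \pm b/(p^n c)$ into the clean $(q^{1-6s}b_1 b_2)^L$ prefactor driving the geometric series.
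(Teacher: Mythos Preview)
Your argument is correct and follows essentially the same route as the paper. Both proofs split on the $r_4$-variable (your $a$ is a rescaling of $r_4$), identify the small region with $I(n)$, and in the large region extract the central torus element $h(r_4^{-1},1)\in Z(M')$ via an $\SL_2$-Iwasawa decomposition to produce the geometric series $\sum_{m\ge 1}(1-q^{-1})(q^{2-6s}b_1b_2)^m$. The only organizational difference is that you first conjugate the full Weyl element $w_\beta w_\alpha w_\beta\in K$ to the right and drop it, thereby working with $\bx_{-(3\alpha+2\beta)}$ and $\bx_{-\beta}$, whereas the paper keeps the Weyl word in place and applies the Iwasawa decomposition directly to $w_\beta\bx_\beta(r_4)$; this lets the paper avoid the commutator $[\bx_{-\beta},\bx_{3\alpha+2\beta}]$ that you have to dispose of via $V'$-invariance. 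One simplification you may have missed: in the hard region your $b$-integral does not require redoing the Shintani bookkeeping at all --- since $h(c^{-1},1)$ is central in $M'$, you can pull it to the far left (picking up $q^{-6sL}(b_1b_2)^L$ and rescaling $b\mapsto c^{-1}b$ with Jacobian $q^L$) and immediately recognize the remaining $b$-integral as $q^{-n}I(n)$.
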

\begin{proof}
To compute $J_1(n)$, we break up the domain of integration in $r_4$ and get
\begin{align*}
J_1(n)&=\int_{F}\int_{|r_4|\le 1} W_{f_s}(h(1,p^n)w_\beta w_\alpha w_\beta \bx_\beta(r_4)\bx_{3\alpha+2\beta}(r_5))dr_4dr_5\\
&+\int_{F}\int_{|r_4|> 1} W_{f_s}(h(1,p^n)w_\beta w_\alpha w_\beta \bx_\beta(r_4)\bx_{3\alpha+2\beta}(r_5))dr_4dr_5\\
&:=J_{11}(n)+J_{12}(n),
\end{align*}
where
\begin{align*}J_{11}(n)=&\int_{F}\int_{|r_4|\le 1}W_{f_s}(h(1,p^n)w_\beta w_\alpha w_\beta \bx_\beta(r_4)\bx_{3\alpha+2\beta}(r_5))dr_4 dr_5 \\
=&\int_{F}\int_{|r_4|\le 1}W_{f_s}(h(1,p^n) w_\beta w_\alpha w_\beta \bx_{3\alpha+2\beta}(r_5)w_\beta^{-1}w_\alpha^{-1} w_\alpha w_\beta \bx_\beta(r_4))dr_4dr_5\\
=&\int_{F}W_{f_s}(h(1,p^n)w_\beta \bx_\beta (r_5))dr_5\\
=&I(n),
\end{align*}
and 

\begin{align*}J_{12}(n)=&\int_{F}\int_{|r_4|> 1}W_{f_s}(h(1,p^n)w_\beta w_\alpha w_\beta \bx_\beta(r_4)\bx_{3\alpha+2\beta}(r_5))dr_4 dr_5 \\
=&\int_{F}\int_{|r_4|> 1}W_{f_s}(h(1,p^n) w_\beta w_\alpha w_\beta \bx_{3\alpha+2\beta}(r_5)w_\beta^{-1}w_\alpha^{-1} w_\alpha w_\beta \bx_\beta(r_4))dr_4dr_5\\
=&\int_{F}\int_{|r_4|> 1}W_{f_s}(h(1,p^n) w_\beta  \bx_{\beta}(r_5) w_\alpha w_\beta \bx_\beta(r_4))dr_4dr_5.
\end{align*}
We have the Iwasawa decomposition of $w_\beta \bx_\beta(r_4)$:
$$w_\beta \bx_\beta(r_4)=\bx_\beta(-r_4^{-1})h(-r_4^{-1},-r_4)\bx_{-\beta}(r_4^{-1}).$$
Since $\bx_{-\beta}(r_4^{-1})$ is in the maximal compact subgroup for $|r_4|>1$, we then get 
\begin{align*}
J_{12}(n)&=\int_{F}\int_{|r_4|> 1}W_{f_s}(h(1,p^n) w_\beta  \bx_{\beta}(r_5) w_\alpha \bx_\beta(-r_4^{-1})h(r_4^{-1},r_4))dr_4dr_5\\
&=\int_F \int_{|r_4|>1}W_{f_s}(h(1,p^n) h(r_4^{-1},1)w_\beta  \bx_{\beta}(r_4^{-1}r_5))dr_4dr_5\\
&=\int_F \int_{|r_4|>1}|r_4|W_{f_s}(h(1,p^n) h(r_4^{-1},1)w_\beta  \bx_{\beta}(r_5))dr_4dr_5\\
&=\sum_{m\ge 1}(1-q^{-1})q^{2m} \int_F W_{f_s}(h(p^m,1)h(1,p^n)w_\beta \bx_\beta(r_5))dr_5,
\end{align*}
where in the second equality, we conjugated $\bx_\beta(-r_4^{-1})h(r_4^{-1},r_4)$ to the left, and in the third equality, we wrote $r_4=p^{-m}u$ for $m\ge 1, u\in \fo^\times$ and used $dr_4=q^mdu, \vol(\fo^\times)=1-q^{-1}$.  Note that $h(p^m,1)$ is in the center of $M'$, and thus  
$$W_{f_s}(h(p^m,1)g)=q^{-6sm}\omega_\tau(p)^mW_{f_s}(g),$$
we get 
$$J_{12}(n)=(1-q^{-1})\sum_{m\ge 1}q^{-6sm+2m}\omega_\tau(p)^m\int_F W_{f_s}(h(1,p^n)w_\beta\bx_\beta(r_5))dr_5.$$
Thus we get 
$$J_1(n)=I(n)+\sum_{m\ge 1}(1-q^{-1})q^{(-6s+2)m}(b_1b_2)^m I(n). $$
A simple calculation gives the formula of $J_1(n)$.
\end{proof}
We next consider the term 
$$J_2(n)=\int_{|r_3|> 1}\int_{F^2} W_{f_s}(h(1,p^n)w_\beta w_\alpha w_\beta \bx_{\alpha+\beta}(r_3)\bx_\beta(r_4)\bx_{3\alpha+2\beta}(r_5))\psi(-r_3)dr_3dr_4dr_5.$$
For $|r_3|>1$, we can write $r_3\in p^{-m}u$ with $m\ge 1, u\in \fo^\times$.
We then have, 
$$J_2(n)=\int_{F^2}\sum_{m\ge 1}q^m W_{f_s}(h(1,p^n)w_\beta w_\alpha w_\beta \bx_{\alpha+\beta}(p^{-m}u)\bx_\beta(r_4)\bx_{3\alpha+2\beta}(r_5))\psi(-p^{-m}u)dudr_4dr_5.$$
Write $\bx_{\alpha+\beta}(p^{-m}u)=h(u,u^{-1})\bx_{\alpha+\beta}(p^{-m})h(u^{-1},u)$, and by conjugation and changing of variables, we get 
$$J_2(n)=\int_{F^2}\sum_{m\ge 1}q^m W_{f_s}(h(u^{-1},p^n)w_\beta w_\alpha w_\beta \bx_{\alpha+\beta}(p^{-m})\bx_\beta(r_4)\bx_{3\alpha+2\beta}(r_5))\psi(-p^{-m}u)dudr_4dr_5,$$
where we used $h(u,u^{-1})$ is in the maximal compact subgroup of $\RG_2(F)$. Since $h(u^{-1},1)$ maps to the center of $M'$ and $ |\omega_\tau(u)|=1 $, we have 
$$W_{f_s}(h(u^{-1},p^n)w_\beta w_\alpha w_\beta \bx_{\alpha+\beta}(p^{-m})\bx_\beta(r_4)\bx_{3\alpha+2\beta}(r_5))=W_{f_s}(1,p^n)w_\beta w_\alpha w_\beta \bx_{\alpha+\beta}(p^{-m})\bx_\beta(r_4)\bx_{3\alpha+2\beta}(r_5)) .$$
Thus we get 
$$J_2(n)=\int_{F^2}\sum_{m\ge 1}q^m W_{f_s}(h(1,p^n)w_\beta w_\alpha w_\beta \bx_{\alpha+\beta}(p^{-m})\bx_\beta(r_4)\bx_{3\alpha+2\beta}(r_5))\psi(-p^{-m}u)dudr_4dr_5.$$
Since
$$\int_{\fo^\times}\psi(p^ku)du=\left\{\begin{array}{lll}1-q^{-1}, & \textrm{ if } k\ge 0,\\ -q^{-1}, & \textrm{ if } k=-1, \\ 0, & \textrm{ if } k\le -2, \end{array}\right.$$
 we get $J_2(n)=-R(n)$, where
\begin{align*}
R(n)=\int_{F^2} W_{f_s}(h(1,p^n)w_\beta w_\alpha w_\beta \bx_{\alpha+\beta}(p^{-1})\bx_\beta(r_4)\bx_{3\alpha+2\beta}(r_5))dr_4dr_5.
\end{align*}
To evaluate $R(n)$, we split the domain of $r_4$, and write $R(n)=R_1(n)+R_{2}(n)$, where 
\begin{align*}R_1(n)&=\int_{|r_4|\le 1} \int_{F} W_{f_s}(h(1,p^n)w_\beta w_\alpha w_\beta \bx_{\alpha+\beta}(p^{-1})\bx_\beta(r_4)\bx_{3\alpha+2\beta}(r_5))dr_4dr_5,\\
&= \int_{F} W_{f_s}(h(1,p^n)w_\beta w_\alpha w_\beta \bx_{\alpha+\beta}(p^{-1})\bx_{3\alpha+2\beta}(r_5))dr_5,
\end{align*}
and 
$$R_2(n)=\int_{|r_4|>1} \int_{F} W_{f_s}(h(1,p^n)w_\beta w_\alpha w_\beta \bx_{\alpha+\beta}(p^{-1})\bx_\beta(r_4)\bx_{3\alpha+2\beta}(r_5))dr_4dr_5 .$$
We now compute $R_1(n)$. We conjugate $w_\alpha w_\beta \bx_{\alpha+\beta}(p^{-1})$ to the right and then get
\begin{align*}R_1(n)&=\int_{F} W_{f_s}(h(1,p^n)w_\beta \bx_{\beta}(r_5)w_\alpha w_\beta\bx_{\alpha+\beta}(p^{-1}))dr_5\\
&=\int_{F} W_{f_s}(h(1,p^n)w_\beta \bx_{\beta}(r_5)w_\alpha \bx_{\alpha}(-p^{-1}))dr_5
\end{align*}
Next, we use the Iwasawa decomposition of $w_\alpha \bx_\alpha(p^{-1})$:
$$w_\alpha \bx_\alpha (-p^{-1})=\bx_\alpha(p)h(p^{-1},p^2)\bx_{-\alpha}(-p)$$
to get 
$$R_1(n)=\int_F W_{f_s}(h(1,p^n)w_\beta \bx_\beta(r_5)\bx_\alpha(p)h(p^{-1},p^2))dr_5.$$
Next, we use the commutator relation
$$\bx_\beta(r_5)\bx_{\alpha}(p)=\bx_{\alpha+\beta}(pr_5)u\bx_\alpha(p)\bx_\beta(r_5),$$
where $u$ is in the root space of $2\alpha+\beta,3\alpha+\beta,3\alpha+2\beta$. Then we get 
$$R_1(n)=\int_F W_{f_s}(h(1,p^n)w_\beta \bx_{\alpha+\beta}(pr_5)u\bx_\alpha(p)\bx_\beta(r_5)h(p^{-1},p^2))dr_5.$$
Note that $w_\beta u \bx_\alpha(r) w_\beta(1)\in V'$, and $h(1,p^n)w_\beta \bx_{\alpha+\beta}(pr_5)(h(1,p^n)w_\beta)^{-1}=\bx_\alpha(-p^{n+1}r_5),$ and $W_{f_s}(\bx_\alpha(r)g)=\psi(2r)W_{f_s}(g)$, we get
\begin{align*}R_1(n)&=\int_F W_{f_s}(h(1,p^n)w_\beta \bx_\beta(r_5)h(p^{-1},p^2))\psi(-2p^{n+1}r_5)dr_5\\
&=\int_F W_{f_s}(h(p^2,1)h(1,p^{n-1})w_\beta \bx_\beta(p^3r_5))\psi(-2p^{n+1}r_5)dr_5\\
&=q^{-12s+3}\omega_\tau(p^2)\int_{F}W_{f_s}(h(1,p^{n-1})w_\beta \bx_\beta(r_5))\psi(-2 p^{n-2}r_5)dr_5,
\end{align*}
where the last equality comes from a changing of variable on $r_5$ and the fact that $h(p^2,1)\mapsto \diag(p^2,p^2)$ under the isomorphism $M'\cong \GL_2$. We next break up the integral on $r_5$ and get
\begin{align*}
R_1(n)&=q^{-12s+3}\omega_\tau(p^2)W_{f_s}(h(1,p^{n-1}))\int_{|r_5|\le 1}\psi(-2p^{n-2}r_5)dr_5\\
&+q^{-12s+3}\omega_\tau(p^2)\int_{|r_5|>1}W_{f_s}(h(1,p^{n-1})w_\beta \bx_\beta(r_5))\psi(-2p^{n-2}r_5)dr_5.
\end{align*}
Using the Iwasawa decomposition of $w_\beta \bx_\beta(r_5)$, we have
\begin{align*}R_1(n)=q^{-12s+3}\omega_\tau(p^2)&\left(W_{f_s}(h(1,p^{n-1}))\int_{|r_5|\le 1}\psi(-2p^{n-2}r_5)dr_5 \right.\\
&\left.+\sum_{m=1}^\infty W_{f_s}(h(p^{m},p^{n-m-1}))q^m\int_{\fo^\times}\psi(-2p^{n-m-2}u)du \right).
\end{align*}
\begin{lem}
We have $R_1(n)=0$ if $n\le 1$, and 
$$R_1(n)=q^{-12s+3}\omega_\tau(p)^2I(n-1)-q^{-6s(n+1)+n+2}\omega_\tau(p)^{n+1},
$$
for $n\ge 2$.
\end{lem}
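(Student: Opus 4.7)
The plan is to evaluate the two displayed character integrals in the closed expression for $R_1(n)$ using the standard formulas recorded just before the lemma, and then to reorganize the surviving terms so that they assemble into $I(n-1)$ plus a single error term. Concretely, the integral $\int_{|r_5|\le 1}\psi(-2p^{n-2}r_5)dr_5$ equals $1$ when $n\ge 2$ and vanishes when $n\le 1$ (using that $p\ne 2$ so $-2\in \fo^\times$ and $\psi$ is unramified), while $\int_{\fo^\times}\psi(-2p^{n-m-2}u)du$ equals $1-q^{-1}$ for $m\le n-2$, equals $-q^{-1}$ for $m=n-1$, and vanishes for $m\ge n$. Combined with the Shintani formula \eqref{eq: shintani}, which forces $W_{f_s}(h(p^m,p^{n-m-1}))=0$ when $m>n-1$, this immediately kills every term when $n\le 1$, giving the first claim.

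For $n\ge 2$, what remains after substituting the values of the character integrals is
\begin{align*}
R_1(n)=q^{-12s+3}\omega_\tau(p)^2\Bigl[&W_{f_s}(h(1,p^{n-1}))\\
&+\sum_{m=1}^{n-2}(1-q^{-1})q^m W_{f_s}(h(p^m,p^{n-m-1}))\\
&-q^{n-2}W_{f_s}(h(p^{n-1},1))\Bigr].
\end{align*}
The key observation is that the formula for $I(n-1)$ derived in the previous lemma is
$$I(n-1)=W_{f_s}(h(1,p^{n-1}))+\sum_{m\ge 1}(1-q^{-1})q^m W_{f_s}(h(p^m,p^{n-1-m})),$$
whose $m\ge n$ summands vanish by Shintani. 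Comparing the two expressions, the bracket above differs from $I(n-1)$ only at the boundary index $m=n-1$: $I(n-1)$ contributes $(1-q^{-1})q^{n-1}W_{f_s}(h(p^{n-1},1))$ there, whereas $R_1(n)$ contributes $-q^{n-2}W_{f_s}(h(p^{n-1},1))$. The net discrepancy is exactly $q^{n-1}W_{f_s}(h(p^{n-1},1))$, so the bracket equals $I(n-1)-q^{n-1}W_{f_s}(h(p^{n-1},1))$.

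To finish, I would evaluate $W_{f_s}(h(p^{n-1},1))$ directly from \eqref{eq: shintani} with $k=n-1,l=0$, obtaining $q^{-6s(n-1)}(b_1b_2)^{n-1}=q^{-6s(n-1)}\omega_\tau(p)^{n-1}$. Substituting and simplifying the exponents collapses the error term to $q^{-6s(n+1)+n+2}\omega_\tau(p)^{n+1}$, matching the stated formula. The whole argument is essentially bookkeeping; the only mildly subtle point is the boundary case $m=n-1$, where one must carefully distinguish the $(1-q^{-1})$ weight coming from $I(n-1)$ from the $-q^{-1}$ weight produced by the character integral, so I expect the main obstacle to be not making a sign or factor-of-$q^{-1}$ slip there.
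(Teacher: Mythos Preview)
Your proposal is correct and follows essentially the same approach as the paper: evaluate the two character integrals using the standard formulas, observe that everything vanishes for $n\le 1$, and for $n\ge 2$ recognize the surviving bracket as $I(n-1)$ minus a single boundary correction at $m=n-1$, then evaluate $W_{f_s}(h(p^{n-1},1))$ via the Shintani formula. The only cosmetic difference is that the paper writes the boundary weight as $-q^{-1}\cdot q^{n-1}$ and then explicitly adds and subtracts the $(1-q^{-1})q^{n-1}$ term to complete the sum to $I(n-1)$, whereas you compute the net discrepancy $q^{n-1}$ directly; the content is identical.
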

\begin{proof}
Note that $\int_{|r|\le 1}\psi(p^kr)dr=0$ if $k<0$ and $\int_{|r|\le 1}\psi(p^kr)dr=1 $ if $k\ge 0$. Moreover, we have 
$$\int_{\fo^\times}\psi(p^ku)du=\left\{\begin{array}{lll}1-q^{-1}, & \textrm{ if } k\ge 0,\\ -q^{-1}, & \textrm{ if } k=-1, \\ 0, & \textrm{ if } k\le -2.  \end{array}\right.$$
Thus we get $R_1(n)=0$ for $n\le 1$. For $n\ge 2$, we have 
\begin{align*}
R_1(n)=&q^{-12s+3}\omega_\tau(p^2)\\
&\cdot\left(W_{f_s}(h(1,p^{n-1}))+\sum_{m=1}^{n-2}(1-q^{-1})q^mW_{f_s}( h(p^{m},p^{n-m-1}))-q^{-1}q^{n-1}W_{f_s}(h(p^{(n-1)},1)) \right).\\
=&q^{-12s+3}\omega_\tau(p^2)\\
&\cdot\left(W_{f_s}(h(1,p^{n-1}))+\sum_{m=1}^{n-1}(1-q^{-1})q^mW_{f_s}( h(p^{m},p^{n-m-1}))-q^{n-1}W_{f_s}(h(p^{(n-1)},1)) \right)\\
=&q^{-12s+3}\omega_\tau(p)^2I(n-1)-q^{-12s+3+n-1}\omega_\tau(p)^2W_{f_s}(h(p^{n-1},1)),
\end{align*}
where in the last equation, we used the formula in the computation of $I(n)$. Since $h(p^{n-1},1)$ is in the center of $M'$, we have $W_{f_s}(h(p^{n-1},1))=q^{-6s(n-1)}\omega_\tau(p)^{n-1}$. The result follows.
\end{proof}

We next consider \begin{align*}
R_2(n)&=\int_{|r_4|>1} \int_{F} W_{f_s}(h(1,p^n)w_\beta w_\alpha w_\beta \bx_{\alpha+\beta}(p^{-1})\bx_\beta(r_4)\bx_{3\alpha+2\beta}(r_5))dr_4dr_5 .
\end{align*}
Conjugating $w_\beta$ to the right side and using the Iwasawa decomposition of $w_\beta \bx_\beta(r_4)$, we can get
\begin{align*}
R_2(n)=\int_F \int_{|r_4|>1}W_{f_s}(h(1,p^n)w_\beta w_\alpha \bx_\alpha (p^{-1})\bx_{3\alpha+\beta}(r_5)\bx_\beta(r_4^{-1})h(r_4^{-1},r_4))dr_4dr_5.
\end{align*}
From the commutator relation, we have
$$\bx_\alpha(p^{-1})\bx_\beta(r_4^{-1})=\bx_\beta(r_4^{-1})\bx_\alpha(p^{-1})\bx_{2\alpha+\beta}(p^{-2}r_4^{-1})u,$$
for some $u$ in the group generated by roots subgroups of $\alpha+\beta,3\alpha+\beta,3\alpha+2\beta$. Like in the computation of $R_1(n)$, we have 
\begin{align*}
R_2(n)&=\int_F\int_{|r_4|>1}W_{f_s}(h(1,p^n)w_\beta w_\alpha \bx_\alpha(p^{-1})\bx_{3\alpha+\beta}(r_5)h(r_4^{-1},r_4))\psi(-2p^{n-2}r_4^{-1})dr_4dr_5\\
&=\int_F \int_{|r_4|>1}W_{f_s}(h(1,p^n)h(r_4^{-1},1)w_\beta \bx_\beta(r_5r_4^{-1})w_\alpha \bx_\alpha(p^{-1}r_4^{-1}))\psi(-2p^{n-2}r_4^{-1})dr_4dr_5\\
&=\int_F \int_{|r_4|>1}|r_4|W_{f_s}(h(1,p^n)h(r_4^{-1},1)w_\beta \bx_\beta(r))\psi(-2p^{n-2}r_4^{-1})dr_4dr\\
&=I(n)\int_{|r_4|>1}|r_4|^{-6s+1}\omega_\tau(r_4^{-1})\psi(-2p^{n-2}r_4^{-1})dr_4\\
&=I(n)\sum_{m= 1}^\infty q^{(-6s+2)m}\omega_\tau(p)^{m}\int_{\fo^\times}\psi(-2p^{m+n-2}u)du.
\end{align*}
\begin{lem}
We have 
$$R_2(n)=\left\{\begin{array}{lll}I(0)q^{-6s+2}\omega_\tau(p)\left(-q^{-1}+(1-q^{-1})\frac{q^{-6s+2}\omega_\tau(p)}{1-q^{-6s+2}\omega_\tau(p)} \right),& n=0, \\ I(n)(1-q^{-1})\frac{q^{-6s+2}\omega_\tau(p)}{1-q^{-6s+2}\omega_\tau(p)},& n\ge 1 \end{array}\right.$$
\end{lem}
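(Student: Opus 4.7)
The identity
\[
R_2(n)=I(n)\sum_{m\ge 1} q^{(-6s+2)m}\omega_\tau(p)^{m}\int_{\fo^\times}\psi(-2p^{m+n-2}u)\,du
\]
derived in the paragraph immediately preceding the lemma already isolates the essential content: the whole computation reduces to the elementary character integral $\int_{\fo^\times}\psi(p^k u)\,du$ together with a geometric summation. Since $F$ has residue characteristic different from $2$, the constant $-2$ is a unit in $\fo^\times$, so the substitution $u\mapsto -\tfrac12 u$ replaces $\psi(-2p^{k}u)$ by $\psi(p^k u)$ while preserving the Haar measure on $\fo^\times$. Hence the formula for $\int_{\fo^\times}\psi(p^k u)\,du$ recalled in the proof of the previous lemma applies with exponent $k=m+n-2$.

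If $n\ge 1$, then for every $m\ge 1$ one has $k=m+n-2\ge n-1\ge 0$, so each inner integral equals $1-q^{-1}$. Factoring this out leaves the geometric series $\sum_{m\ge 1}\bigl(q^{-6s+2}\omega_\tau(p)\bigr)^m$, which sums to $\tfrac{q^{-6s+2}\omega_\tau(p)}{1-q^{-6s+2}\omega_\tau(p)}$ and yields the stated formula in the case $n\ge 1$. If $n=0$, then only the term $m=1$ is exceptional: it corresponds to $k=-1$ and contributes $-q^{-1}$, while all terms with $m\ge 2$ give $1-q^{-1}$. Separating the $m=1$ term, factoring out the common prefactor $q^{-6s+2}\omega_\tau(p)$, and summing the remaining tail $\sum_{k\ge 1}(q^{-6s+2}\omega_\tau(p))^k$ produces the $n=0$ formula.

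No analytic issue arises: the geometric series converges in the region $\Re(s)\gg 0$ in which the entire unramified calculation is being carried out, so the interchange of summation and integration is automatic. The only real point to check carefully is the case split according to whether $m+n-2\ge 0$, $=-1$, or $\le -2$, which is precisely what produces the two branches of the answer.
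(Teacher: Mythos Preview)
Your argument is correct and follows precisely the same route as the paper: reduce to the already-derived series formula, evaluate the inner Gauss-type integral $\int_{\fo^\times}\psi(p^{m+n-2}u)\,du$ according to the sign of $m+n-2$, and sum the resulting geometric series in the two cases $n\ge 1$ and $n=0$. The only extra remark you add is the explicit justification that $-2\in\fo^\times$ allows removal of the $-2$ in the character, which the paper leaves implicit.
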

\begin{proof}
If $n\ge 1$, then $\int_{\fo^\times}\psi(p^{m+n-2}u)du=(1-q^{-1}) $ for $m\ge 1$. Thus, we have 
\begin{align*}
R_2(n)&=I(n)\sum_{m= 1}^\infty q^{(-6s+2)m}\omega_\tau(p)^{m}(1-q^{-1})\\
&=I(n)(1-q^{-1})\frac{q^{-6s+2}\omega_\tau(p)}{1-q^{-6s+2}\omega_\tau(p)}.
\end{align*}
If $n=0$, then $\int_{\fo^\times}\psi(p^{m+n-2}u)du=(1-q^{-1}) $ for $m\ge 2$, and $\int_{\fo^\times}\psi(p^{m+n-2}u)du=-q^{-1}$ for $m=1$. Thus, we have 
\begin{align*}
R_2(0)&=I(0)(-q^{-1}q^{-6s+2}\omega_\tau(p)+(1-q^{-1})\sum_{m=2}^\infty q^{(-6s+2)m}\omega_\tau(p)^{m})\\
&=I(0)q^{-6s+2}\omega_\tau(p)\left(-q^{-1}+(1-q^{-1})\frac{q^{-6s+2}\omega_\tau(p)}{1-q^{-6s+2}\omega_\tau(p)} \right).
\end{align*}
The completes the proof of the lemma.
\end{proof}
Combining the above results, we get the following
\begin{lem}
We have 
\begin{align*}
R(n)=\left\{\begin{array}{lll}-I(0)q^{-6s+1}\omega_\tau(p)\frac{1-q^{-6s+3}\omega_\tau(p)}{1-q^{-6s+2}\omega_\tau(p)}, & n=0,\\
 I(1)(1-q^{-1})\frac{q^{-6s+2}\omega_\tau(p)}{1-q^{-6s+2}\omega_\tau(p)}, & n=1,\\
q^{-12s+3}\omega_\tau(p)^2I(n-1)-q^{-6s(n+1)+n+2}\omega_\tau(p)^{n+1} &\\
\qquad \qquad \qquad +I(n)(1-q^{-1})\frac{q^{-6s+2}\omega_\tau(p)}{1-q^{-6s+2}\omega_\tau(p)}, & n\ge 2,
 \end{array}\right.
\end{align*}
and 
\begin{align*}
J(n)&=J_1(n)-R(n)\\
&=\left\{\begin{array}{lll}1+Y, & n=0\\
I(1), & n=1,\\
I(n)-q^{-1}Y^2 I(n-1)+q^{-n}Y^{n+1}, & n\ge 2.
  \end{array} \right.
\end{align*}
where $Y=q^{-6s+2}\omega_\tau(p)$
\end{lem}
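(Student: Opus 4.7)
The plan is to assemble the lemma from pieces that have already been computed. Recall we defined
\[
R(n)=R_1(n)+R_2(n),\qquad J(n)=J_1(n)-R(n),
\]
so everything reduces to a bookkeeping of the preceding two lemmas combined with the formula for $J_1(n)$ established earlier. The first step is to write $R(n)$ by simply adding the expressions for $R_1(n)$ and $R_2(n)$. Since the previous lemma gives $R_1(n)=0$ for $n\le 1$, the cases $n=0$ and $n=1$ of $R(n)$ coincide with $R_2(n)$ and are read off directly. For $n\ge 2$ we add the two formulas, using the identity $\omega_\tau(p)=b_1b_2$ implicit in $\tau$ being unramified with Satake parameters $b_1,b_2$.

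The second step is to pass to the variable $Y=q^{-6s+2}\omega_\tau(p)$, which compresses the expressions. The useful translations are
\[
q^{-12s+3}\omega_\tau(p)^2 \;=\; q^{-1}Y^{2},\qquad q^{-6s(n+1)+n+2}\omega_\tau(p)^{n+1} \;=\; q^{-n}Y^{n+1},
\]
\[
J_1(n)\;=\;\frac{1-q^{-1}Y}{1-Y}\,I(n),
\]
the last being the formula proved a few lemmas earlier (with $b_1b_2$ rewritten as $\omega_\tau(p)$). No new analytic input is required; the only possible hiccup is keeping the $q$--powers straight in the translation.

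The third step is the algebraic simplification of $J(n)=J_1(n)-R(n)$. For $n=0$ we use $I(0)=1$ and compute
\[
\frac{1-q^{-1}Y}{1-Y}-\Bigl(-\,\tfrac{Y}{q}\cdot\tfrac{1-qY}{1-Y}\Bigr)\;=\;\frac{1-Y^{2}}{1-Y}\;=\;1+Y.
\]
For $n=1$ the two contributions $\frac{1-q^{-1}Y}{1-Y}I(1)$ from $J_1$ and $\frac{(1-q^{-1})Y}{1-Y}I(1)$ from $R$ subtract to $\frac{1-Y}{1-Y}I(1)=I(1)$. For $n\ge 2$ the same cancellation of the $I(n)$--coefficient occurs, leaving
\[
J(n)\;=\;I(n)-q^{-1}Y^{2}I(n-1)+q^{-n}Y^{n+1},
\]
which is exactly the claimed formula.

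There is really no conceptual obstacle; the lemma is a routine consolidation. The only place requiring care is the verification that the coefficient of $I(n)$ in $J_1(n)-R(n)$ collapses to $1$ in every case $n\ge 1$, which is the identity $(1-q^{-1}Y)-(1-q^{-1})Y=1-Y$. After that the formula for $J(n)$ follows line by line from the formulas for $R_1(n)$, $R_2(n)$ and $J_1(n)$ established above.
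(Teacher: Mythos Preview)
Your proposal is correct and follows exactly the same approach as the paper, which simply writes ``Combining the above results, we get the following'' and states the lemma without further justification. Your writeup actually supplies more detail than the paper does, carrying out explicitly the translation to $Y$, the $n=0$ simplification $\frac{1-Y^2}{1-Y}=1+Y$, and the key identity $(1-q^{-1}Y)-(1-q^{-1})Y=1-Y$ that collapses the $I(n)$-coefficient for $n\ge 1$.
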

By the main result of \cite{BFH}, we have 
$$\wt W(t(p^n))=\frac{\mu_\psi(p^{n})q^{-n}}{a-a^{-1}}\left( (1-\chi(p)q^{-1/2}a^{-1})a^{n+1}-(1-\chi(p)q^{-1/2}a)a^{-(n+1)}\right),$$
where $\chi(p)=(p,p)_F=(p,-1)_F.$ Note that the notation $\gamma(a)$ in \cite{BFH} is our $\mu_\psi(a)^{-1}$. Note that $\mu_\psi(p^n)\mu_\psi(p^n)=(p^n,p^n)_F=\chi(p)^n$.
Thus 
\begin{align*}
I(\wt W,W_{f_s},\phi)&=\sum_{n\ge 0}\frac{q^{3n/2}\chi(p)^n}{a-a^{-1}}\left( (1-\chi(p)q^{-1/2}a^{-1})a^{n+1}-(1-\chi(p)q^{-1/2}a)a^{-(n+1)}\right) J(n).
\end{align*}

Plugging the formula $J(n)$ into the above equation, we can get that 
\begin{align*}
    I(\wt W,W_f,\phi)
    =&\frac{(1-b_1q^{-1}X)(1-b_2q^{-1}X)(1-b_1b_2q^{-1}X^2)(1-b_1^2b_2q^{-1}X^3)(1-b_1b_2^2q^{-1}X^3)}{(1-\chi(p)a^{-1}b_1b_2q^{-1/2}X^2)(1-\chi(p)ab_1b_2q^{-1/2}X^2)}\\
    &\cdot \frac{1}{\prod_{i=1}^2(1-\chi(p)a^{-1}b_iq^{-1/2}X)\prod_{i=1}^2(1-\chi(p)ab_iq^{-1/2}X)}\\
    =&\frac{L(3s-1,\wt\pi\times (\chi\otimes\tau))L(6s-5/2,\wt\pi\otimes (\chi\otimes\omega_\tau))}{L(3s-1/2,\tau)L(6s-2,\omega_\tau)L(9s-7/2,\tau\otimes\omega_\tau)}.
\end{align*}
Here $$L(s,\wt \pi\otimes (\chi\otimes \tau))=\frac{1}{(1-a\chi(p)b_1b_2q^{-s})((1-a^{-1}\chi(p)b_1b_2q^{-s}))}$$
is the $L$ function of $\wt\pi$ twisted by the character $\chi\otimes \omega_\tau $, and 
$$L(s,\wt \pi\times(\chi\otimes \tau))=\frac{1}{\prod_{i=1}^2(1-\chi(p)a^{-1}b_iq^{-s})\prod_{i=1}^2(1-\chi(p)ab_iq^{-s})}$$
is the Rankin-Selberg $L$-function of $\wt\pi$ twisted by $\chi\otimes \tau$.

We record the above calculation in the following 
\begin{prop}\label{prop: unramified calculation}
Let $\wt W\in \CW(\wt \pi,\psi)$ be the normalized unramified Whittaker function, $f_s$ be the normalized unramified section in $I(s,\tau)$ and $\phi\in \CS(F)$ is the characteristic function of $\fo$, we have 
$$ I(\wt W,W_{f_s},\phi)=\frac{L(3s-1,\wt\pi\times (\chi\otimes\tau))L(6s-5/2,\wt\pi\otimes (\chi\otimes\omega_\tau))}{L(3s-1/2,\tau)L(6s-2,\omega_\tau)L(9s-7/2,\tau\otimes\omega_\tau)}. $$
\end{prop}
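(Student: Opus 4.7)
The plan is to evaluate the unramified local integral by unwinding it to a sum of Shintani-type values of $W_{f_s}$, and then to recognize the result as the stated ratio of Euler factors. Starting from the form established in Theorem \ref{thm: eulerian}, I will apply the Iwasawa decomposition $\SL_2(F)=N_{\SL_2}(F)A_{\SL_2}(F)\SL_2(\fo)$, parameterize $U_{\alpha+\beta}(F)\backslash V(F)$ by coordinates $[r_1,0,r_3,r_4,r_5]$, and use the Weil representation formulas. The support constraint $\wt W(t(a))\neq 0 \Rightarrow |a|\le 1$ together with $\omega_\psi(t(a)\bx_\alpha(r_1))\phi(1)=\mu_\psi(a)|a|^{1/2}\phi(a+r_1)$ and $\phi=\mathbf{1}_\fo$ restricts the integration to $a=p^n$, $n\ge 0$, and $r_1\in \fo$. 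Conjugating $w_\alpha\bx_\alpha(r_1)$ (which lies in the maximal compact of $G_2$) to the far right reduces the whole expression to a sum
$$I(\wt W,W_{f_s},\phi)=\sum_{n\ge 0}\wt W(t(p^n))\,q^{5n/2}\mu_\psi(p^n)\,J(n),$$
where $J(n)$ is a single triple integral of $W_{f_s}$ along $\bx_{\alpha+\beta}(r_3)\bx_\beta(r_4)\bx_{3\alpha+2\beta}(r_5)$.

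To evaluate $J(n)$ I will split each unipotent variable $r_i$ into the pieces $|r_i|\le 1$ and $|r_i|>1$. On the non-compact piece I will use an Iwasawa decomposition of the form $w_\gamma\bx_\gamma(r)=\bx_\gamma(-r^{-1})h_\gamma(-r^{-1})\bx_{-\gamma}(r^{-1})$ to convert the contribution into a torus translation. Together with the commutator relations \eqref{eq:commutator}, the torus action on root subgroups \eqref{eq:torusactiononroots}, and the Whittaker equivariance $W_{f_s}(\bx_\alpha(r)g)=\psi(2r)W_{f_s}(g)$, this reduces everything to values $W_{f_s}(h(p^k,p^l))$ that are given by the Shintani formula \eqref{eq: shintani}. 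Each non-compact piece produces a geometric series in the valuation of the corresponding variable, which sums to a rational function of $q^{-s}$ and the Satake parameters of $\tau$. I expect the integrals to organize themselves into two layers of auxiliary quantities, with the inner layer essentially $I(n):=\int_F W_{f_s}(h(1,p^n)w_\beta\bx_\beta(r))\,dr$ and the outer layer built from it by further Iwasawa decompositions in $r_4$ and $r_3$.

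Finally, I will insert the Bump-Friedberg-Hoffstein formula from \cite{BFH} for the metaplectic Whittaker value $\wt W(t(p^n))$, use the identity $\mu_\psi(p^n)^2=(p^n,p^n)_F=\chi(p)^n$ to eliminate the Weil index, and evaluate the remaining two geometric series in $a^{\pm(n+1)}$. The main obstacle will be the final algebraic bookkeeping: many cross terms are produced by the splittings above, and one must clear denominators and factor the resulting rational expression carefully in order to match it against the prescribed product
$$\frac{L(3s-1,\wt\pi\times (\chi\otimes\tau))\,L(6s-5/2,\wt\pi\otimes (\chi\otimes\omega_\tau))}{L(3s-1/2,\tau)\,L(6s-2,\omega_\tau)\,L(9s-7/2,\tau\otimes\omega_\tau)}.$$
I expect to verify the factorization by comparing poles and zeros in the Satake parameters $a,b_1,b_2$, following the pattern of the unramified calculation in \cite{Gi95}.
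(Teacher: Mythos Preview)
Your proposal is correct and follows essentially the same approach as the paper: the paper carries out exactly the Iwasawa reduction to $\sum_{n\ge 0}\wt W(t(p^n))q^{5n/2}\mu_\psi(p^n)J(n)$ you describe, evaluates $J(n)$ by successive splittings of $r_3$ and $r_4$ (with $r_5$ becoming the variable in the inner integral $I(n)$) using the Iwasawa decomposition of $w_\gamma\bx_\gamma(r)$, and then inserts the formula of \cite{BFH} together with $\mu_\psi(p^n)^2=\chi(p)^n$ to obtain the asserted ratio of $L$-factors. The only refinement worth noting is that in the paper the order of splitting is $r_3$ first (giving $J_1(n)+J_2(n)$), then $r_4$ inside each piece, rather than all three variables simultaneously.
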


\section{Some local theory}
In this section, let $F$ be a local field, which can be archimedean or non-archimedean. If $F$ is non-archimedean, let $\fo$ be the ring of integers of $F$, $ p$ be a uniformizer of $\fo$ and $q=\fo/(p)$. Let $\wt\pi$ be an irreducible genuine generic representation of $\wt \SL_2(F)$, $\tau$ be an irreducible generic representation of $\GL_2(F)$. Let $\psi$ be a nontrivial additive character of $F$. 

\begin{lem}
Let $\wt W\in \CW(\wt\pi,\psi), f_s\in I(s,\tau),\phi\in \CS(F)$, then the integral $I(\wt W,W_{f_s},\phi)$ converges absolutely for $\Re(s)$ large and has a meromorphic continuation to the whole $s$-plane. Moreover, if $F$ is a $p$-adic field, then $I(\wt W,W_{f_s},\phi)$ is a rational function in $q^{-s}$.
\end{lem}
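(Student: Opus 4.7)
The proof follows the standard template for local Rankin-Selberg zeta integrals. The plan is to bound the integrand via Iwasawa decompositions on both $\SL_2(F)$ and $\bG_2(F)$, and then to invoke Bernstein's continuation principle in the $p$-adic case.

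First I would apply the Iwasawa decomposition $\SL_2(F) = N_{\SL_2}(F) A_{\SL_2}(F) K_{\SL_2}$ to reduce the outer integral to one over $F^\times$ with measure $|a|^{-2}\, d^\times a$, after absorbing the compact $K_{\SL_2}$-integration. I parametrize $U_{\alpha+\beta}(F)\setminus V(F)$ by $v = [r_1, 0, r_3, r_4, r_5]$ as in Section \ref{sec:unramified}, and the Weil representation formulas give
$$\omega_\psi(v\, t(a))\phi(1) = \mu_\psi(a)|a|^{1/2}\psi(r_3)\phi(a + r_1),$$
providing Schwartz decay in $r_1$. Next I would perform an Iwasawa decomposition of $\gamma v t(a) = v'(a,r)\, m'(a,r)\, k'(a,r)$ with respect to $P' = M' V'$, using the commutator relations \eqref{eq:commutator} and the Bruhat relation $\gamma = w_\beta w_\alpha w_\beta w_\alpha$. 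This yields a bound of the form
$$|W_{f_s}(\gamma v t(a))| \leq \delta_{P'}(m'(a,r))^{\Re(s)} \cdot |W_\tau(m'(a,r))| \cdot C(k'(a,r)),$$
where $W_\tau$ is a Whittaker function of $\tau$ and $C$ is a bounded function on $K$. Combined with the standard asymptotics of $\wt W(t(a))$ (compactly supported as $|a| \to \infty$ and bounded by $|a|^{1/2-\epsilon}$ as $|a| \to 0$) and of $W_\tau$, one concludes that for $\Re(s)$ sufficiently large the integrand is dominated by an integrable function, giving absolute convergence.

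For the meromorphic continuation in the $p$-adic case, I would invoke Bernstein's continuation principle, or alternatively carry out the Iwasawa decomposition explicitly (as in Section \ref{sec:unramified}) and express the integral as a finite sum of multiple geometric series in $q^{-s}$, each manifestly a rational function of $q^{-s}$, which simultaneously yields the meromorphic continuation and the rationality statement. In the archimedean case, meromorphic continuation follows from asymptotic expansions of Whittaker functions together with standard contour-shift arguments.

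The principal difficulty is the explicit Iwasawa decomposition of $\gamma v t(a)$: because $V$ is a $5$-dimensional non-abelian unipotent group of Heisenberg type, this decomposition is case-dependent on the valuations of the $r_i$ and requires systematic application of the commutator relations. However, this is essentially the same combinatorial work already carried out (piecewise) in the unramified calculation of Section \ref{sec:unramified}, so no fundamentally new obstacle arises.
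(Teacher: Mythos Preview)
Your proposal is correct and follows the standard template for local Rankin--Selberg integrals; the paper itself omits the proof entirely, referring only to the analogous arguments in \cite[Lemma 4.2--4.7]{Gi93} and \cite[Lemma 3.10, Lemma 3.3]{Gi95}, which are exactly of the type you describe. In fact you have supplied more detail than the paper does.
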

The proof is similar to \cite[Lemma 4.2-4.7]{Gi93} and \cite[Lemma 3.10, Lemma 3.3]{Gi95}. We omit the details. 

\begin{lem}
 Let $s_0\in \BC$. Then there exists $\wt W\in \CW(\wt\pi,\psi),f_{s_0}\in I(s_0,\tau),\phi\in \CS(F)$ such that $I(\wt W,W_{f_{s_0}},\phi)\ne 0$.
\end{lem}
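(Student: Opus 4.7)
The plan is to exhibit data $(\wt W, f_{s_0}, \phi)$ for which the integrand in the unfolded formula
\[
I(\wt W, W_{f_{s_0}}, \phi) = \int_{N_{\SL_2}(F)\backslash \SL_2(F)} \int_{U_{\alpha+\beta}(F)\backslash V(F)} \wt W(g)(\omega_\psi(vg)\phi)(1) W_{f_{s_0}}(\gamma v g) \, dv \, dg
\]
is localized near the point $(v,g)=(1,1)$ and strictly nonzero there. The key structural input is that $\gamma$ represents the open $P'$-$P$ double coset of $G_2(F)$, so the map $(g,v)\mapsto \gamma v g$ embeds $(N_{\SL_2}(F)\backslash \SL_2(F))\times (U_{\alpha+\beta}(F)\backslash V(F))$ as an open subset of $P'(F)\backslash G_2(F)$.

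First I would construct $f_{s_0}$. Using the Iwasawa decomposition $G_2(F)=P'(F)K$, sections of $I(s_0,\tau)$ correspond to $V_\tau$-valued functions on $K$ with the appropriate transformation law under $P'(F)\cap K$. Since $\tau$ is generic, pick $v_\tau\in V_\tau$ with $W_{v_\tau}(1)\ne 0$, and define $f_{s_0}$ so that its image in $P'(F)\backslash G_2(F)$ is supported on a small neighborhood of $[\gamma]$, with value $v_\tau$ at $\gamma$ itself; in the non-archimedean case this is a locally constant cutoff, and in the archimedean case a smooth bump. By the openness of the orbit, such a neighborhood can be chosen to correspond to a product of small neighborhoods of the identity in $V(F)$ and $\SL_2(F)$. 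Hence $W_{f_{s_0}}(\gamma v g)$ is supported near $(v,g)=(1,1)$ and approaches $W_{v_\tau}(1)\ne 0$ there.

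Next I would take $\phi\in \CS(F)$ with $\phi(1)\ne 0$, so that by the explicit Weil representation formulas the map $(v,g)\mapsto (\omega_\psi(vg)\phi)(1)$ is continuous and nonzero at $(v,g)=(1,1)$, and $\wt W\in \CW(\wt\pi,\psi)$ with $\wt W(1)\ne 0$, available since $\wt\pi$ is generic. All four factors of the integrand are then continuous and nonzero at $(1,1)$, while the integrand vanishes outside the support of $W_{f_{s_0}}(\gamma vg)$. By continuity the integrand is bounded away from $0$ on an open neighborhood of $(1,1)$, forcing $I(\wt W,W_{f_{s_0}},\phi)\ne 0$.

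The main obstacle is the explicit construction of $f_{s_0}$ with the prescribed localized support and prescribed value at $\gamma$, together with, in the archimedean setting, the production of $\wt W\in \CW(\wt\pi,\psi)$ of sufficiently small support near the identity (via a Dixmier--Malliavin type argument). Both steps are standard and parallel constructions found in \cite{Gi93, Gi95}.
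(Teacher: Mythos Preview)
Your overall strategy---localize the section $f_{s_0}$ near the open cell and evaluate at the identity---is exactly the standard one, and it is what the references \cite{Gi93,Gi95} (which the paper simply cites in lieu of a proof) do. However, there is a concrete error in your justification.

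The claim that $(g,v)\mapsto \gamma vg$ embeds $(N_{\SL_2}\backslash \SL_2)\times (U_{\alpha+\beta}\backslash V)$ as an open subset of $P'\backslash G_2$ is false on dimension grounds: the left side has dimension $2+4=6$ while $P'\backslash G_2$ has dimension $5$. What is true is that $(B_{\SL_2}\backslash \SL_2)\times (U_{\alpha+\beta}\backslash V)$ embeds openly (this is the content of $\SL_2^\gamma=B_{\SL_2}$, $V^\gamma=U_{\alpha+\beta}$ from the unfolding). The discrepancy is the torus $A_{\SL_2}$: for $a\in F^\times$ one has $\gamma\, t(a)\,\gamma^{-1}=h(1,a)\in M'$, so $f_{s_0}(\gamma vg)$ and $f_{s_0}(\gamma v\,t(a)g)$ lie in the same $P'$-coset and your support condition on $f_{s_0}$ cannot force $a$ to be near $1$. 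Consequently ``$W_{f_{s_0}}(\gamma vg)$ is supported near $(v,g)=(1,1)$'' is not correct; its support is a tube over the whole torus direction.

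The fix is to use $\phi$ (not just $\wt W$) to cut down the torus direction. With $f_{s_0}$ localized near $[\gamma]$ as you describe, the $v$-integral and the compact part of $g$ collapse, and what remains is essentially the torus integral
\[
\int_{F^\times}\wt W(t(a))\,(\omega_\psi(t(a))\phi)(1)\,\delta_{P'}^{s_0}(h(1,a))\,W_{v_\tau}(\diag(a,1))\,|a|^{-2}\,d^\times a,
\]
and since $(\omega_\psi(t(a))\phi)(1)=|a|^{1/2}\mu_\psi(a)\phi(a)$, choosing $\phi\in\CS(F)$ with small support near $1$ (and $\phi(1)\ne 0$) localizes $a$ near $1$. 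Then all factors are nonzero at $a=1$ and the integral is nonzero. So your argument goes through once you strengthen the hypothesis on $\phi$ from ``$\phi(1)\ne 0$'' to ``$\phi$ supported near $1$''.
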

\begin{proof}
The proof is similar to the proof of \cite[Lemma 4.4,4.7]{Gi93}, \cite[Proposition 3.4]{Gi95}. We omit the details.
\end{proof}

\section{Nonvanishing of certain periods on \texorpdfstring{$G_2$}{}}

\subsection{Poles of Eisenstein series on \texorpdfstring{$\RG_2$}{}}\label{sec: poles of Eisenstein series}
Let $\tau$ be a cuspidal unitary representation of $\GL_2(\BA)\cong M'(\BA)$. Let $K$ be a maximal compact subgroup of $\RG_2(\BA)$. Given a $K\cap \GL_2(\BA)$-finite cusp form $f$ in $\tau$, we can extend $f$ to a function $\wt f:\RG_2(\BA)\ra \BC$ as in \cite[\S 2]{Sh}. We then define $$\Phi_{\wt f,s}(g)=\wt f(g) \delta_{P'}(m')^{s/3+1/2},$$
for $g=v'm'k$ with $v'\in V'(\BA),m'\in M'(\BA),k\in K$. Then $\Phi_{\wt f,s}$ is well-defined and $\Phi_{\wt f,s}\in I(\frac{s}{3}+\frac{1}{2},\tau)$. Then we can consider the Eisenstein series
$$E(s,\wt f,g)=\sum_{P'(F)\backslash \RG_2(F)}\Phi_{\wt f,s}(\gamma g).$$
\begin{prop}
The Eisenstein series $E(s,\wt f, g)$ has a pole on the half plane $\Re(s)>0$ if and only if $s=\frac{1}{2}, \omega_\tau=1$ and $L(\frac{1}{2},\tau)\ne 0$.
\end{prop}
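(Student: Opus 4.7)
The plan is to apply the Langlands--Shahidi method. Since $P'$ is a maximal parabolic and $\tau$ is cuspidal, the poles of $E(s,\wt f,g)$ in $\Re(s)>0$ correspond to those of its constant term along $P'$. The constant term has the Iwasawa--Bruhat form
\[
E_{P'}(s,\wt f,g) \;=\; \Phi_{\wt f,s}(g)\;+\;M(w,s)\Phi_{\wt f,s}(g),
\]
where $w$ is the length-$5$ Weyl element (reduced expression $s_\beta s_\alpha s_\beta s_\alpha s_\beta$) sending $V'$ to $\overline{V'}$ while stabilizing $M'$. The first summand is holomorphic, so the analysis reduces to the global intertwining operator $M(w,s)$.

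\textbf{Step 1: $M'$-structure of $\mathfrak{v}'$.} Using the torus action \eqref{eq:torusactiononroots} and the commutator relations \eqref{eq:commutator}, a direct weight computation (together with the identification $h(a,b)\mapsto\diag(ab,a)$) yields
\[
\mathfrak{v}'/Z(\mathfrak{v}')\;\cong\;\Sym^3(\mathrm{std})\otimes\det{}^{-1}\quad(\text{4-dim}),\qquad Z(\mathfrak{v}') = \mathfrak{u}_{3\alpha+2\beta}\;\cong\;\det\quad(\text{1-dim}).
\]
By the Gindikin--Karpelevich formula in Shahidi's normalization, the unramified scalar by which $M(w,s)$ acts on the spherical vector is
\[
r(s) \;=\; \frac{L^S(s,\Sym^3\tau\otimes\omega_\tau^{-1})\,L^S(2s,\omega_\tau)}{L^S(s+1,\Sym^3\tau\otimes\omega_\tau^{-1})\,L^S(2s+1,\omega_\tau)},
\]
and globally $M(w,s) = r(s)\cdot N(w,s)$, where $N(w,s)$ is the product of normalized local intertwining operators, which is holomorphic and non-vanishing in $\Re(s)\geq 0$ for unitary $\tau$.

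\textbf{Step 2: Locating the pole.} For $\Re(s)>0$ the denominators in $r(s)$ are holomorphic and nonzero (their arguments have real part $>1$), and $L^S(s,\Sym^3\tau\otimes\omega_\tau^{-1})$ is entire for generic cuspidal $\tau$. The only source of a pole is $L^S(2s,\omega_\tau)$, which has one exactly when $\omega_\tau=1$ at $2s=1$. This forces the necessary conditions $s=1/2$ and $\omega_\tau=1$. When these hold, the residue of $r(s)$ at $s=1/2$ is (up to a nonzero constant) $L^S(1/2,\Sym^3\tau)/\bigl(L^S(3/2,\Sym^3\tau)\zeta^S(2)\bigr)$, and the pole of $E(s,\wt f,g)$ survives provided this residue produces a non-zero automorphic form after application of $N(w,1/2)$.

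\textbf{Main obstacle.} The delicate point is translating non-vanishing of this residue into the sharp condition $L(1/2,\tau)\ne 0$. The cleanest route uses Theorem~\ref{thm: main 1}: pairing the residue of $E(s,\wt f,g)$ at $s=1/2$ against suitable $\wt\varphi\in\wt\pi$ and $\wt\theta_\phi$, and invoking Proposition~\ref{prop: unramified calculation} at unramified places after specialization, yields a quantity proportional to $L(1/2,\tau)$; thus non-vanishing of the residue is equivalent to $L(1/2,\tau)\ne 0$. Alternatively, one can invoke the Kim--Shahidi identity $L(s,\Sym^3\tau)=L(s,\tau\times\Sym^2\tau)/L(s,\tau)$ (valid when $\omega_\tau=1$) together with the holomorphy of $L(s,\Sym^3\tau)$ to reduce $L(1/2,\Sym^3\tau)\ne 0$ to the same condition. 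The remaining subtlety is to choose $\wt f$ and auxiliary data so that no ramified local factor obstructs the non-vanishing, which is achieved by a standard density argument.
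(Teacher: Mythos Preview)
The paper does not prove this proposition; it simply cites \cite{Za} and \cite{Kim}. Your Langlands--Shahidi strategy is indeed what those references carry out, but your execution contains a concrete error in Step~1 that propagates through the rest.

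You computed the $M'$-module structure of $\mathfrak{v}'$ and correctly found $\Sym^3(\mathrm{std})\otimes\det^{-1}$ plus $\det$. However, the Langlands--Shahidi normalizing factor is governed by the action of $\hat{M}'$ on $\hat{\mathfrak{v}}'$, not of $M'$ on $\mathfrak{v}'$. For $G_2$ the Langlands dual is $G_2$ with long and short roots interchanged, so the dual of $P'$ (short root $\alpha$ in the Levi) is the parabolic with the \emph{long} root in its Levi --- i.e.\ $P$ in the paper's notation. Under $\hat{M}'\cong\GL_2$ the module $\hat{\mathfrak{v}}'$ decomposes as $\mathrm{std}\oplus\det\oplus(\mathrm{std}\otimes\det)$ (three pieces, not two), and the correct normalizing ratio is
\[
r(s)\;=\;\frac{L^S(s,\tau)\,L^S(2s,\omega_\tau)\,L^S(3s,\tau\otimes\omega_\tau)}{L^S(s+1,\tau)\,L^S(2s+1,\omega_\tau)\,L^S(3s+1,\tau\otimes\omega_\tau)}.
\]
This is corroborated by the paper itself: the denominator $L^S(s+1,\tau)L^S(2s+1,\omega_\tau)L^S(3s+1,\tau\otimes\omega_\tau)$ appearing in the proof of Theorem~\ref{thm: nonvanishing of a period} is exactly the Langlands--Shahidi denominator for this case.

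With the correct $r(s)$ your ``Main obstacle'' evaporates. For $\Re(s)>0$ the numerators $L^S(s,\tau)$ and $L^S(3s,\tau\otimes\omega_\tau)$ are entire (cuspidal $\GL_2$), so the only possible pole is from $L^S(2s,\omega_\tau)$ at $s=1/2$ with $\omega_\tau=1$. The residue of $r(s)$ there is a nonzero constant times $L^S(1/2,\tau)\,L^S(3/2,\tau)$ divided by nonvanishing factors, and $L^S(3/2,\tau)\ne 0$. Hence the residue is nonzero precisely when $L(1/2,\tau)\ne 0$ --- the condition in the statement drops out directly, with no need to pass through $\Sym^3$, the Kim--Shahidi identity, or the Rankin--Selberg integral of this paper. (Invoking Theorem~\ref{thm: main 1} here would in any case be logically backwards: the paper uses the present Proposition to define the residual representation $\CR(1/2,\tau)$ that features in Theorem~\ref{thm: nonvanishing of a period}.)
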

For a proof of the above proposition, see \cite[\S 1]{Za} or \cite[\S 5]{Kim}. If $\omega_\tau=1$ and $L(\frac{1}{2},\tau)\ne 0$, denote by $\CR(\frac{1}{2},\tau)$ the space generated by the residues of Eisenstein series $E(s,\wt f,g)$ defined as above. Note that an element $ R\in \CR(\frac{1}{2},\tau)$ is an automorphic form on $G_2(\BA) $.

\subsection{On the Shimura-Waldspurger lift} Let $\wt\pi$ be a genuine cuspidal automorphic representation of $\wt {\SL}_2(\BA)$. Let $Wd_\psi(\wt\pi)$ be the Shimura-Waldspurger lift of $\wt\pi$. Then $Wd_\psi(\wt \pi)$ is a cuspidal representation of $\RP\GL_2(\BA)$. A cuspidal automorphic representation $ \tau$ is in the image of $Wd_\psi$ if and only if $L(\frac{1}{2},\tau)\ne 0$. Moreover, the correspondence $\wt \pi\mapsto Wd_\psi(\wt\pi)$ respects the Rankin-Selberg $L$-functions. For these assertions, see \cite{Wald} or \cite{G}.

\subsection{A period on \texorpdfstring{$G_2$}{}}
\begin{thm}\label{thm: nonvanishing of a period}
Let $\wt \pi$ be a genuine cuspidal automorphic representation of $\wt \SL_2(\BA)$ and $\tau$ be a unitary cuspidal automorphic representation of $\GL_2(\BA)$. Assume that $\omega_\tau=1$ and $L(\frac{1}{2},\tau)\ne 0$. In particular, $\tau$ can be viewed as a cuspidal automorphic representation of $\RP\GL_2(\BA)$. If $Wd_\psi(\wt \pi)=\chi\otimes\tau,$
   then there exists $\wt \varphi \in V_{\wt \pi}, \phi\in \CS(\BA), R\in \CS(\frac{1}{2},\tau)$ such that the period 
    $$\CP(\wt \varphi, \wt\theta_\phi,R)=\int_{\SL_2(F)\backslash \SL_2(\BA)}\int_{V(F)\backslash V(\BA)}\wt \varphi(g)\wt \theta_\phi(vg)R(vg)dvdg $$
    is non-vanishing.
\end{thm}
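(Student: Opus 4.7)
The plan is to realize $\CP(\wt\varphi,\wt\theta_\phi,R)$ as the residue at $s_0=2/3$ of the Rankin--Selberg integral $I(\wt\varphi,\phi,f_s)$ from Section~3, and to show that this residue is nonzero for a suitable choice of data. The Eisenstein series $E(s,\wt f,g)$ of Section~6.1 uses sections in $I(\tfrac{s}{3}+\tfrac{1}{2},\tau)$ and, under our assumptions, has its pole at $s=1/2$, which corresponds to the inducing parameter $s_0=\tfrac{1}{6}+\tfrac{1}{2}=\tfrac{2}{3}$. Consequently,
$$\Res_{s=s_0}I(\wt\varphi,\phi,f_s)=\int_{\SL_2(F)\backslash\SL_2(\BA)}\int_{V(F)\backslash V(\BA)}\wt\varphi(g)\wt\theta_\phi(vg)R(vg)\,dv\,dg=\CP(\wt\varphi,\wt\theta_\phi,R),$$
where $R=\Res_{s=s_0}E(vg,f_s)\in\CR(\tfrac{1}{2},\tau)$ (up to a harmless constant coming from the change of variables). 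Thus it suffices to exhibit data for which $I(\wt\varphi,\phi,f_s)$ has a simple pole at $s_0$.

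By Theorem~\ref{thm: eulerian}, Proposition~\ref{prop: unramified calculation}, and Lemma~5.1, for a finite set $S$ of bad places we may write
$$I(\wt\varphi,\phi,f_s)=\frac{L^S(3s-1,\wt\pi\times(\chi\otimes\tau))\,L^S(6s-\tfrac{5}{2},\wt\pi\otimes(\chi\otimes\omega_\tau))}{L^S(3s-\tfrac{1}{2},\tau)\,L^S(6s-2,\omega_\tau)\,L^S(9s-\tfrac{7}{2},\tau\otimes\omega_\tau)}\prod_{v\in S}I_v(\wt W_v,W_{f_{s,v}},\phi_v).$$
At $s=s_0$, using $\omega_\tau=1$, the denominator becomes $L^S(3/2,\tau)\,\zeta^S(2)\,L^S(5/2,\tau)$, which is holomorphic and nonvanishing on $\Re(s)\ge 1$ by Jacquet--Shalika. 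The second numerator factor is $L^S(3/2,\wt\pi\otimes\chi)=L^S(3/2,\tau)$, also holomorphic nonzero. The decisive factor is the first one: since $Wd_\psi$ preserves Rankin--Selberg L-functions and $Wd_\psi(\wt\pi)=\chi\otimes\tau$,
$$L(s,\wt\pi\times(\chi\otimes\tau))=L(s,(\chi\otimes\tau)\times(\chi\otimes\tau))=L(s,\tau\times\tau)=L(s,\Sym^2\tau)\,L(s,\omega_\tau),$$
and $L(s,\omega_\tau)=\zeta(s)$ has a simple pole at $s=1$ while $L(1,\Sym^2\tau)$ is finite and nonzero. Hence the global L-function ratio acquires a simple pole at $s=s_0$.

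To turn this into a pole of the global integral, apply Lemma~5.2 at each $v\in S$ to choose local data $(\wt W_v,f_{s_0,v},\phi_v)$ with $I_v(\cdot)\ne 0$ at $s_0$; by a standard genericity/continuity argument the selection can be made compatibly so that the product $\prod_{v\in S}I_v(s)$ is nonzero at $s=s_0$. Combined with the simple pole of the L-function ratio, this produces a simple pole of $I(\wt\varphi,\phi,f_s)$ at $s_0$. Taking residues and invoking the identity in the first paragraph yields $\CP(\wt\varphi,\wt\theta_\phi,R)\neq 0$ for this choice of data.

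The main technical obstacle is ensuring that the pole of $L^S(3s-1,\wt\pi\times(\chi\otimes\tau))$ at $s_0$ is not cancelled by a zero of some local integral $I_v$ for $v\in S$. Since each $I_v$ is a rational function of $q_v^{-s}$ in the nonarchimedean case and meromorphic in $s$ at archimedean places, and since Lemma~5.2 guarantees, for any prescribed $s_0$, the existence of local data making $I_v(s_0)\neq0$, the obstruction is manageable: one chooses local data place by place and then assembles a global test function whose bad-place contributions do not vanish at $s_0$. Secondary bookkeeping concerns---verifying the constant of proportionality between $\Res_{s=1/2}E(s,\wt f,g)$ and $\Res_{s=s_0}E(vg,f_s)$ is nonzero, and ensuring the outermost integrations are absolutely convergent after taking residues (so that one may interchange $\Res$ and the integrals over $\SL_2$ and $V$)---are standard in the Langlands--Shahidi framework and follow from the decay of residual automorphic forms.
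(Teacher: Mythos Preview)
Your proposal is correct and follows essentially the same route as the paper's proof: identify the period as the residue of the global integral, use Theorem~\ref{thm: eulerian} and Proposition~\ref{prop: unramified calculation} to express $I(\wt\varphi,\phi,f_s)$ as a product of partial $L$-functions times bad-place integrals, show the $L$-ratio has a simple pole (via $Wd_\psi(\wt\pi)=\chi\otimes\tau$ forcing a pole of $L^S(s,\wt\pi\times(\chi\otimes\tau))$ at $s=1$), and invoke Lemma~5.2 to make the finite product $I_S$ nonzero. The only cosmetic difference is that the paper works with the Shahidi-normalized variable (pole at $s=\tfrac12$) while you work directly in the inducing parameter (pole at $s_0=\tfrac23$); your explicit factorization $L(s,\tau\times\tau)=L(s,\Sym^2\tau)\zeta(s)$ and your identification $L^S(3/2,\wt\pi\otimes\chi)=L^S(3/2,\tau)$ spell out steps the paper leaves implicit.
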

\begin{proof}
 For $\wt \varphi\in V_\pi, \phi\in \CS(\BA)$ and a good section $\Phi_{\wt f, s}$ as in \S\ref{sec: poles of Eisenstein series}, by Theorem \ref{thm: eulerian} and Proposition \ref{prop: unramified calculation}, we have
\begin{align*}
    I(\wt \varphi,\phi,\wt f,s)&=\int_{\SL_2(F)\backslash \SL_2(\BA)}\int_{V(F)\backslash V(\BA)} \wt \varphi(g)\wt\theta_{\phi}(vg)E(vg,\Phi_{\wt f,s})dvdg\\
    &=\int_{N_{\SL_2}(\BA)\backslash \SL_2(\BA)}\int_{U_{\alpha+\beta}(\BA)\backslash V(\BA)}W_{\wt \varphi}(g)\omega_\psi(vg)\phi(1)W_{\Phi_{\wt f,s}}(\gamma vg)dvdg \nonumber\\
    &=I_S \cdot \frac{L^S(s+\frac{1}{2},\wt \pi\times (\chi\otimes\tau))L^S(2s+\frac{1}{2},\wt \pi\otimes (\chi\otimes\omega_\tau))}{L^S(s+1,\tau)L^S(2s+1,\omega_\tau)L^S(3s+1,\tau\otimes \omega_\tau)}. \nonumber
\end{align*}
Here $S$ is a finite set of places of $F$ such that for $v\notin S$, $ \pi_v,\tau_v$ are unramified, and $ I_S$ is the product of the local zeta integrals over all places $v\in S$ and $L^S$ denotes the partial $L$-function which is the product of all local $L$-function as the place $v$ runs over $v\notin S$. Note that $\tau\cong \tau^\vee$ since $\omega_\tau=1$. Suppose that $Wd_\psi(\wt \pi)=\chi\otimes \tau=\chi\otimes\tau^\vee$, then $L^S(s+1/2, \wt \pi \times (\chi\otimes\tau)) $ has a pole at $s=1/2$. Note that at $s=\frac{1}{2}$, $L^{S}(2s+1/2,\wt \pi\otimes (\chi\otimes\omega_\tau))$ is holomorphic and nonzero, while $L^S(s+1,\tau)L^S(2s+1,\omega_\tau)L^S(3s+1,\tau\otimes\omega_\tau)$ is holomorphic. Moreover, $I_S$ can be chosen to be nonzero. Thus we get that $I(\wt \varphi,\phi, \wt f,s)$
has a pole at $s=1/2$, which means that there exists a residue $R(g,\wt f )$ of $E(s,\wt f,g)$ such that 
$$\CP(\wt \varphi,\theta_\phi,R)=\int_{\SL_2(F)\backslash \SL_2(\BA)}\int_{V(F)\backslash V(\BA)}\wt \varphi(g)\wt \theta_\phi(vg)R(vg,\wt f)dvdg\ne 0.$$
This completes the proof.
\end{proof}

\begin{rmk}{\rm 
For an $L^2$-automorphic form $\eta\in L^2(G_2(F)\backslash G_2(\BA))$, one can form the period $$\eta_{\wt\phi,\wt\theta_\phi}(g)=\int_{\SL_2(F)\backslash \SL_2(\BA)}\int_{V(F)\backslash V(\BA)}\wt \varphi(h)\wt\theta_\phi(vh)\eta(vhg)dvdh,$$
for a genuine cusp form $\wt \phi$ of $\wt{\SL}_2(\BA)$ and $\phi\in \CS(\BA)$. Theorem \ref{thm: nonvanishing of a period} says that if $\eta$ varies in $ \CS(\frac{1}{2},\tau)$, then under the condition $Wd_\psi(\wt \pi)=\chi\otimes \tau$, the period $\eta_{\wt \varphi,\wt \theta_\phi} $ is non-vanishing for certain $\wt\varphi\in V_{\wt\pi}$ and $\phi\in \CS(\BA)$. For general $\eta$, one can ask under what conditions the period $\eta_{\wt\varphi,\wt\theta_\phi}$ is not identically zero as $\wt\varphi$ varies in $V_{\wt\pi}$ and $\phi\in \CS(\BA)$. In the classical group case, this is the global Gan-Gross-Prasad conjecture for Fourier-Jacobi case, see 
\cite{GGP}. It is natural to ask if it is possible to extend the GGP-conjecture to the $G_2$-case.
}
\end{rmk}

\end{document}